\numberwithin{equation}{section}
\newtheorem{teo}{Theorem}[section]
\newtheorem{lema}[teo]{Lemma}
\newtheorem{cor}[teo]{Corollary}
\newtheorem{afir}[teo]{Claim}
\newtheorem{de}[teo]{Definition}
\newtheorem{pr}[teo]{Proposition}
\newtheorem{re}[teo]{Remark}
\newtheorem{bigteo}{Theorem}
\newtheorem{bigcor}[bigteo]{Corollary}
\author[]{Felipe Nobili}
\title[]{Partially hyperbolic sets with a dynamically minimal invariant lamination.}
\begin{document}

\begin{abstract} 
We study partially hyperbolic sets of $C^1$-diffeomorphisms. For these sets there are defined
the strong stable and strong unstable laminations.
A lamination is called dynamically minimal when the orbit of each leaf 
intersects the set densely.

We prove that
partially hyperbolic sets having a dynamically minimal lamination 
have empty interior. We also study the Lebesgue measure and the spectral decomposition
of these sets.
These results can be applied to $C^1$-generic/robustly transitive attractors 
with one-dimensional center bundle. 

\end{abstract}

\keywords{
attractor and repeller,
homoclinic classes,
Lebesgue measure,
partial hyperbolicity, 
spectral decomposition,
stable and unstable laminations,
transitivity}
\subjclass[2000]{
37B20, 37B29, 37C20,  37C70, 37D10, 37D30}

\maketitle 
\tableofcontents

\newpage


\section{Introduction}



Hyperbolicity of a proper set imposes quite specific properties of its ``size'' and ``structure'', 
especially when the dynamics on it is transitive.
For instance, it is well known that transitive hyperbolic proper sets have empty interior.
This is proved using the saturation principle in \cite{B40}\footnote{By \emph{saturation} we mean the saturation of a set by the leaves of the stable and unstable 
foliations, see also Definition~\ref{d.saturation}. For non-transitive sets, in \cite{B35} there is  an example  of a hyperbolic proper set with robustly non-empty interior.}. Bowen proved in \cite{B36}  that $C^2$ hyperbolic horseshoes have zero Lebesgue measure.
The proof of this result involves  bounded distortion arguments as well as the absolute continuity of the foliations,  ingredients which are not available for maps with less regularity.
Indeed, \cite{B31} provided an example of  $C^1$ hyperbolic horseshoe with positive Lebesgue measure.

Similar results were obtained for non-hyperbolic dynamics assuming a weaker form of hyperbolicity known as partial hyperbolicity. A set $\Lambda\subset M$ is \emph{partially hyperbolic}  for a diffeomorphism $f: M \to M$ if the tangent bundle $T_{\Lambda}M$ over the set $\Lambda$ has a dominated splitting into three $Df$-invariant subbundles $E^s \oplus E^c \oplus E^u$, where $E^s$ and $E^u$ are uniformly expanded by $Df$ and ${Df}^{-1}$, respectively. When $E^s$, $E^c$, and $E^u$ are all nontrivial, we speak of \emph{strongly partially hyperbolic} sets.

The results in \cite{B5} study the case when the non-wandering set $\Omega(f)$ is partially hyperbolic and has non-empty interior.
Recall that $C^1$-generically\footnote{We say that a property holds $C^1$-generically  if it holds for a residual ($G_{\delta}$ and $C^1$-dense) subset of the space of $C^1$ diffeomorphisms.} the set $\Omega (f)$ splits into pairwise disjoint homoclinic classes\footnote{A homoclinic class is
a (not necessarily hyperbolic) generalisation of a horseshoe:
it is a
transitive set associated to a hyperbolic periodic point $p$  defined as the closure of the transverse intersections of the stable and unstable manifolds of $p$.} which are its elementary pieces and form its spectral
decompositiom, see \cite{B34} and Definition \ref{d.sd}.
It is  proved that a strongly partially hyperbolic homoclinic class with non-empty interior is the whole manifold. Moreover, when the whole manifold is partially hyperbolic, this result holds $C^1$-openly. 
Similar results were obtained in \cite{B26}
assuming that the homoclinic class is \emph{bi-Lyapunov stable}, which is a slightly more general condition than having non-empty interior.

Finally, considering  again the Lebesgue measure of invariant transitive sets
and in the same spirit of \cite{B36}, the results in \cite{B6} extended Bowen's result 
to the partially hyperbolic setting
by showing that
sufficiently regular diffeomorphisms (of a class of differentiability bigger than one)
have no ``horseshoe-like'' partially hyperbolic sets with positive Lebesgue measure.


In this work we deal with partially hyperbolic transitive sets $\Lambda$ of $C^1$-diffeo\-mor\-phisms. 
We provide sufficient conditions guaranteing that these sets have empty interior or zero Lebesgue measure. A key feature in this setting is the existence of invariant dynamically defined \emph{laminations} integrating the bundles  $E^s$ and $E^u$, that we denote by  $\mathcal{F}^s$ and $\mathcal{F}^u$, respectively.  When 
for each leaf of the lamination its
orbit 
has a  dense intersection with $\Lambda$, the lamination is said to be  \emph{dynamically minimal}  (see Definition \ref{def.minimal}).  In this case, we say that $\Lambda$ is an \emph{$s$-minimal} or \emph{$u$-minimal set}, according to which lamination ($\mathcal{F}^s$ or $\mathcal{F}^u$) is dynamically minimal. In \cite{p1} we prove that there is a wide class of systems verifying this property: robustly/generically transitive attractors with one-dimensional center bundle (see also \cite{B2,B10} for previous results in this direction).

 Our main result (Theorem~\ref{Tlzo}) claims that  $u$- and $s$-minimal proper sets have empty interior.
 Assuming that the central bundle is one-dimensional we prove that, $C^1$-generically,  $s$-minimal proper attractors 
have zero Lebesgue measure (see Theorem~\ref{TF}).

Another motivation of this paper concerns the spectral decomposition results for sets containing the relevant
part of the dynamics (limit, non-wandering, chain-recurrent sets, etc.). In the classical hyperbolic case, this decomposition consists of 
finitely many sets, called \emph{basic pieces}, which each is a homoclinic class, see \cite{B24}.
 Specially important sets
in this decomposition are the attractors and the repellers,
which are persistent and robustly transitive and whose
basins
form an open and dense subset of the ambient space.  There are some non-hyperbolic counterparts for this decomposition based on Conley's theory (see \cite{B34,B30}). More recently, \cite{B21} states a $C^1$-generic  spectral decomposition theorem  for chain-transitive locally maximal sets.  Here we prove a spectral decomposition theorem for $s$- and $u$-minimal homoclinic classes, see  Theorems~\ref{TG} and \ref{TH}.

\subsection{Statement of the results}

The precise definitions and notations involved in the results in this section can be found in Section~\ref{pre}.

\begin{bigteo} \label{Tlzo}
Every 
$s$- or $u$-minimal proper set has empty interior. 
\end{bigteo}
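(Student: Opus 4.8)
The plan is to extend the saturation principle of the hyperbolic transitive case, with the dynamical minimality of $\mathcal{F}^{s}$ (or $\mathcal{F}^{u}$) playing the role of transitivity. First I would reduce to the $s$-minimal case by reversing time: the strong stable bundle of $f$ is the strong unstable bundle of $f^{-1}$, so $\Lambda$ is $u$-minimal for $f$ exactly when it is $s$-minimal for $f^{-1}$, while being proper or having empty interior is unaffected by this change. So assume $\Lambda$ is $s$-minimal and, for contradiction, that $U:=\operatorname{int}\Lambda\neq\emptyset$. Since $\Lambda$ is compact and $f$-invariant and $M$ is connected, a proper $\Lambda$ cannot be open, hence $\partial\Lambda=\Lambda\setminus U\neq\emptyset$; both $U$ and $\partial\Lambda$ are $f$-invariant. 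The target is to show that every point of $\Lambda$ lies in $U$, so that $\Lambda$ is clopen and therefore $\Lambda=M$, contradicting properness.

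Here is the engine I would set up. Fix $x_{0}\in U$ and $r_{0}>0$ with $B(x_{0},r_{0})\subset\Lambda$, and put $\rho(y)=d(y,M\setminus\Lambda)$, a $1$-Lipschitz function with $\{\rho>0\}=U$. First note that by $s$-minimality the $\mathcal{F}^{s}$-saturation of $U$ is dense in $\Lambda$: the orbit of the leaf $\mathcal{F}^{s}(x_{0})$ is dense in $\Lambda$ and is contained in $\bigcup_{x\in U}\mathcal{F}^{s}(x)$, since $U$ is $f$-invariant. So, given $z\in\Lambda$, $s$-minimality of $\mathcal{F}^{s}(z)$ provides $n\in\mathbb{Z}$ and $q\in\mathcal{F}^{s}(f^{n}z)=f^{n}(\mathcal{F}^{s}(z))$ with $\rho(q)\geq r_{0}/2$; then $\rho\geq r_{0}/4$ on the strong stable disk $W^{s}_{r_{0}/4}(q)$, so $U$ contains an honest $M$-open set $\mathcal{D}$ that contains this stable disk and has a definite transverse thickness. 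Iterating $\mathcal{D}$ forward, the uniform expansion along $E^{u}$ enlarges the $E^{u}$-extent of $f^{m}(\mathcal{D})\subset U$ without bound, so for $m$ large $f^{m}(\mathcal{D})$ contains a \emph{full} local strong unstable disk together with an $M$-neighbourhood of it, all inside $U$; symmetrically, iterating $B(x_{0},r_{0})$ backward places full local strong stable disks of points of $U$, with $M$-neighbourhoods, inside $U$.

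The crux — and the step I expect to be the main obstacle — is to upgrade ``$U$ contains full local strong stable and strong unstable disks near suitable points, and $\mathcal{F}^{s}$-leaves through points of $U$ approach every point of $\Lambda$'' to ``$U$ contains a full $M$-neighbourhood of an arbitrary $z\in\Lambda$''. This is exactly where partial, as opposed to uniform, hyperbolicity bites: the central bundle $E^{c}$ is neither uniformly contracted nor uniformly expanded, so \emph{neither} forward nor backward iteration ever enlarges a neighbourhood in the central directions, and the device that tames the $E^{s}$ and $E^{u}$ directions is unavailable for $E^{c}$. The way I would try to close it is to transport the central thickness $r_{0}$ carried by the fixed box $B(x_{0},r_{0})$ to the vicinity of an arbitrary $z$ with controlled distortion: first show that $U$ is saturated by \emph{both} laminations $\mathcal{F}^{s}$ and $\mathcal{F}^{u}$ — arguing that the minimality of the lamination forces the relatively open set $U\cap L$ to exhaust each leaf $L$, so that $U$ is locally a union of $su$-plaques — and then propagate the residual central thickness along $su$-paths of uniformly bounded length by a holonomy/accessibility argument inside $U$, or, alternatively, by a bounded-iteration scheme along a recurrent orbit of $z$ chosen so that the distortion on $E^{c}$ stays under a fixed bound. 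Carrying this through shows $\Lambda$ is open, hence clopen, hence $\Lambda=M$, contradicting properness; by the time-reversal reduction the $u$-minimal case follows as well, completing the proof.
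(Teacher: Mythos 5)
Your setup matches the paper's: reduce the $u$-case to the $s$-case via $f^{-1}$, assume $\operatorname{int}(\Lambda)\neq\emptyset$ and $\partial\Lambda\neq\emptyset$, and use dynamical minimality to place a point of (an iterate of) the leaf of a boundary point $z$ well inside the interior. Indeed, your point $q$ already gives, after applying $f^{-n}$ and using the invariance of $\operatorname{int}(\Lambda)$, a point $x\in\mathcal{F}^{s}(z)\cap\operatorname{int}(\Lambda)$. But you then stall at the wrong obstacle. The central direction is a red herring here: no accessibility, no $su$-paths, and no distortion control on $E^{c}$ are needed (and none of these tools is available for a general $C^1$ partially hyperbolic set --- accessibility is not assumed, and your ``bounded-iteration scheme with controlled distortion on $E^c$'' is not a well-defined argument). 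The transverse thickness --- central \emph{and} unstable --- is transported along the strong stable leaf not by the dynamics but by the continuity of the lamination itself: if $B\subset\operatorname{int}(\Lambda)$ is a small open ball around $x$ and every full leaf $\mathcal{F}^{s}(y)$, $y\in B$, is contained in $\Lambda$, then $V=\bigcup_{y\in B}\mathcal{F}^{s}(y)\subset\Lambda$ is (by the classical saturation principle, i.e.\ local injectivity of the leaves over a transversal plus invariance of domain along the compact leaf segment from $x$ to $z$) a neighborhood of the whole leaf $\mathcal{F}^{s}(x)=\mathcal{F}^{s}(z)$, hence of $z$, contradicting $z\in\partial\Lambda$. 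That single observation closes the proof.

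The ingredient you are missing to run this is the paper's Lemma~\ref{ss}: once $\Lambda$ contains \emph{one} strong stable disk (which it does, since it has interior), it contains the \emph{entire} strong stable leaf of \emph{every} point of $\Lambda$. This is proved by iterating the disk backwards (its inner radius stays bounded below), passing to an accumulation point $y$ of the backward orbit to get $\delta$-disks in $\Lambda$ along the whole orbit of $y$, hence the full leaf of $y$ in $\Lambda$, and then using minimality plus closedness of $\Lambda$ and continuity of the lamination to propagate this to all points. Without this you cannot assert that the leaves through $B$ stay inside $\Lambda$ all the way out to $z$, which is why you were driven to the (incorrect) claim that $\operatorname{int}(\Lambda)$ is saturated because ``minimality forces the relatively open set $U\cap L$ to exhaust each leaf'' --- minimality is a density statement about orbits of leaves in $\Lambda$ and says nothing of the sort about a relatively open subset of a single leaf. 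So: replace the accessibility/distortion step by (i) Lemma~\ref{ss}-type full-leaf saturation of the closed set $\Lambda$, and (ii) the saturation principle applied to a ball around $x\in\mathcal{F}^{s}(z)\cap\operatorname{int}(\Lambda)$; as written, your argument has a genuine gap at its final and decisive step.
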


From Theorem B in \cite{p1}  (see also item (2) of  Proposition \ref{unif2} in this paper), we get 
immediately
the following corollary.

\begin{bigcor} \label{Clzo} A
 $C^1$-generic robustly transitive partially hyperbolic proper attractor with one-dimensional center bundle has  robustly empty interior.
\end{bigcor}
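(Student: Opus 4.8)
The plan is to obtain Corollary~\ref{Clzo} as a short deduction from Theorem~\ref{Tlzo}, the missing ingredient being Theorem~B of \cite{p1} (equivalently, item~(2) of Proposition~\ref{unif2}), which guarantees that a robustly transitive attractor with one-dimensional center bundle has a dynamically minimal lamination. First I would fix a $C^1$-diffeomorphism $f$ and a robustly transitive partially hyperbolic \emph{proper} attractor $\Lambda$ of $f$ with one-dimensional center bundle. Being partially hyperbolic, $\Lambda$ carries the strong stable and strong unstable laminations $\mathcal F^s$ and $\mathcal F^u$, so the notions of $s$- and $u$-minimality from Definition~\ref{def.minimal} make sense for it; Theorem~B of \cite{p1} then asserts that $\Lambda$ is $s$-minimal or $u$-minimal. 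Since $\Lambda$ is proper, Theorem~\ref{Tlzo} applies directly and gives $\mathrm{int}\,\Lambda=\emptyset$.

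Next I would argue the robustness of this conclusion. All the hypotheses are $C^1$-open: robust transitivity of $\Lambda$ is by definition a $C^1$-robust property, and it comes together with a neighborhood $V$ of $\Lambda$ and a $C^1$-neighborhood of $f$ on which the continuation $\Lambda_g=\bigcap_{n\ge 0}g^n(V)$ is defined and transitive; partial hyperbolicity of $\Lambda$ and the dimension of its center bundle are inherited by $\Lambda_g$; and $\Lambda$ being a \emph{proper} attractor persists, since one may take $V\subsetneq M$ to be a trapping region for $f$ (a small neighborhood of $\Lambda$), which remains a trapping region for every $g$ $C^1$-close to $f$, so that $\Lambda_g\subseteq V\subsetneq M$ is again a proper attractor. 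Shrinking the $C^1$-neighborhood of $f$ if necessary, we obtain $\mathcal U\ni f$ such that for every $g\in\mathcal U$ the continuation $\Lambda_g$ is a robustly transitive partially hyperbolic proper attractor with one-dimensional center; applying Theorem~B of \cite{p1} on $\mathcal U$ makes each $\Lambda_g$ $s$- or $u$-minimal, and then Theorem~\ref{Tlzo} yields $\mathrm{int}\,\Lambda_g=\emptyset$ for all $g\in\mathcal U$. This is exactly the assertion that $\Lambda$ has robustly empty interior.

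The only genuinely delicate point, I expect, is the bookkeeping in the second paragraph: one must be sure that the conclusion of Theorem~B of \cite{p1} holds \emph{robustly}, i.e.\ for every $g$ in a $C^1$-neighborhood of $f$ and not merely on a residual subset of it, because having empty interior is not an obviously $C^1$-open condition and could a priori be destroyed by perturbation. Beyond this, all the dynamical substance of the statement is already carried by Theorem~\ref{Tlzo} and by \cite{p1}, and the corollary then follows at once.
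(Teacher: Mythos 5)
Your overall route is exactly the paper's: combine Theorem~\ref{Tlzo} with Theorem~B of \cite{p1} (item~(2) of Proposition~\ref{unif2}). The one point you flag but do not close is in fact where the ``$C^1$-generic'' hypothesis of the corollary is used, and your first paragraph silently drops it: Theorem~B of \cite{p1} does \emph{not} assert that every robustly transitive partially hyperbolic attractor with one-dimensional center is $s$- or $u$-minimal, only that this holds for $g$ in an open and dense subset $\mathcal{B}$ of $\mathrm{RTPHA}_1(U)$. So for an arbitrary $f$ your first step is unjustified. The repair is immediate once genericity is invoked: a $C^1$-generic $f$ with such an attractor lies in the open and dense set $\mathcal{B}$, and for $g\in\mathcal{B}$ Proposition~\ref{unif2}(2) gives that $\Lambda_g(U)$ is \emph{robustly} $s$-minimal or \emph{robustly} $u$-minimal; this, rather than the openness bookkeeping of your second paragraph, is what makes the hypothesis of Theorem~\ref{Tlzo} hold throughout a whole $C^1$-neighborhood of $f$ (properness persisting as you note, via the trapping region), whence the interior is robustly empty. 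With that substitution your argument coincides with the paper's intended one-line deduction.
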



In the next statement, $\Lambda_f(U)$ denotes the maximal invariant set of $f$ in the open set $U$. 

\begin{bigteo} \label{TF} For a generic $f \in \operatorname{Diff}^1(M)$,  let $\Lambda_f(U)$ be a partially hyperbolic $s$-minimal proper attractor with one-dimensional center bundle. Then there are a neighborhood $\mathcal{U}$ of $f$,  an open and dense subset $\mathcal{V} \subset \mathcal{U}$, and a residual subset $\mathcal{W}$ of $\mathcal{U}$ such that: 

\begin{enumerate} 
\item $\Lambda_g(U)$ has empty interior for all $g \in \mathcal{V}$.
\item $\Lambda_g(U)$ has zero Lebesgue measure for all $g \in \mathcal{W}$.\end{enumerate}
\smallskip

Moreover, the set  $\mathcal{W}$ contains every $C^{1+\alpha}$ diffeomorfism in $\mathcal{V}$, for every $\alpha > 0$.
\end{bigteo}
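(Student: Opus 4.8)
The plan is to fix a perturbation neighborhood $\mathcal U$ of $f$ and then split the argument into a soft part and a Bowen-type core. Partial hyperbolicity with one-dimensional center and the trapping condition $f(\overline U)\subset U$ are $C^1$-open, so there is a $C^1$-neighborhood $\mathcal U$ of $f$ on which $\Lambda_g(U)$ is still a partially hyperbolic attractor with one-dimensional center; after shrinking the trapping region we may assume $\overline U\neq M$, so $\Lambda_g(U)$ is proper for all $g\in\mathcal U$. From \cite{p1} (cf.\ item~(2) of Proposition~\ref{unif2}) one extracts a $C^1$-open and dense subset $\mathcal V\subset\mathcal U$ such that $\Lambda_g(U)$ is $s$-minimal for every $g\in\mathcal V$. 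Item~(1) is then immediate: an $s$-minimal proper set has empty interior by Theorem~\ref{Tlzo}, so $\operatorname{int}\Lambda_g(U)=\emptyset$ for all $g\in\mathcal V$.

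For item~(2) I would first reduce to the $C^{1+\alpha}$ case by a Baire argument. The function $g\mapsto m(\Lambda_g(U))$ is upper semicontinuous: as $\Lambda_g(U)=\bigcap_{n\ge 0}g^n(\overline U)$ is a decreasing intersection of compacta, every open $V\supset\Lambda_g(U)$ contains some $g^N(\overline U)$, hence contains $\Lambda_{g'}(U)$ for $g'$ near $g$, and outer regularity of $m$ gives $\limsup_{g'\to g}m(\Lambda_{g'}(U))\le m(\Lambda_g(U))$. So its set of continuity points $\mathcal C\subset\mathcal U$ is residual. Since $\mathcal V$ is open and dense and $C^{\infty}$ diffeomorphisms are $C^1$-dense, the diffeomorphisms of $\mathcal V$ that are $C^{1+\alpha}$ for some $\alpha>0$ are dense in $\mathcal U$; thus, granting that $m(\Lambda_g(U))=0$ for each such $g$, continuity forces $m(\Lambda_g(U))=0$ on $\mathcal C$. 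Taking $\mathcal W:=\mathcal C\cup\{\,g\in\mathcal V:\ g\ \text{is}\ C^{1+\alpha}\ \text{for some}\ \alpha>0\,\}$ gives a residual subset of $\mathcal U$ satisfying (2) and containing every $C^{1+\alpha}$ diffeomorphism of $\mathcal V$.

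The core is: if $g\in\mathcal V$ is $C^{1+\alpha}$, then $m(\Lambda_g(U))=0$. I would argue by contradiction, assuming $m(\Lambda)>0$ for $\Lambda:=\Lambda_g(U)$. Being an attractor, $\Lambda$ is saturated by the strong unstable lamination, $\mathcal F^u(x)\subset\Lambda$ for all $x\in\Lambda$; also $\operatorname{int}\Lambda=\emptyset$ by item~(1). Since $g$ is $C^{1+\alpha}$ we may use absolute continuity of the strong laminations and bounded distortion along their leaves. Pick a Lebesgue density point $x\in\Lambda$; by $\mathcal F^u$-saturation and Fubini, and using $\dim E^c=1$ (so that inside $W^{cs}_{\mathrm{loc}}(x)$ the strong stable plaques vary over a one-dimensional center transversal), one gets local strong stable plaques $\mathcal F^s_{\mathrm{loc}}(z)$, $z\in\Lambda$, on which $\Lambda$ has density arbitrarily close to $1$. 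Iterating by $g^{-n}$, which expands the strong stable direction with bounded distortion, produces arbitrarily long pieces of strong stable leaves on which $\Lambda$ has density close to $1$, and the backward orbit of such a piece exhausts its whole leaf; by $s$-minimality the images of these pieces are then dense in $\Lambda$. Combining this with $\mathcal F^u$-saturation, the one-dimensional center, and bounded distortion should upgrade ``density close to $1$'' to \emph{full} Lebesgue density at every point of $\Lambda$, i.e.\ show that the set $D$ of density points of $\Lambda$ is open; but then $D\subset\operatorname{int}\Lambda=\emptyset$, contradicting $m(D)=m(\Lambda)>0$.

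I expect this last propagation to be the main obstacle. Bowen's horseshoe argument enjoys a local product structure $\mathcal F^s\times\mathcal F^u$, absolute continuity, bounded distortion, and saturation by \emph{both} laminations; here $\Lambda$ is saturated only by $\mathcal F^u$, the bundle $E^s\oplus E^u$ need not be integrable, and there is an extra (one-dimensional) center direction, so $s$-minimality is the only available substitute for the missing stable saturation. Making the transport of positive density throughout $\Lambda$ quantitative — controlling distortions and stable holonomies along the long stable pieces and along the dense orbit of the stable leaf, and deducing the openness of $D$ — is where the real work lies; the measure-theoretic and Baire-category steps above are comparatively routine.
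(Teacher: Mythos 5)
Your Baire-category and semicontinuity steps are fine (they are exactly Lemma~\ref{>} and Corollary~\ref{gen} of the paper), but there are two genuine gaps. The first is at the very start: you claim an open and dense $\mathcal V\subset\mathcal U$ on which $\Lambda_g(U)$ is $s$-minimal, citing item~(2) of Proposition~\ref{unif2}. That item applies only to \emph{robustly transitive} attractors, which is not a hypothesis of Theorem~\ref{TF}; the paper explicitly remarks that item~(1) of Theorem~\ref{TF} is stronger than Corollary~\ref{Clzo} precisely because robust transitivity is not assumed. Without it, Proposition~\ref{unif2} only gives a \emph{residual} subset $\mathcal J_0\subset\mathcal U$ on which $s$-minimality holds, so quoting Theorem~\ref{Tlzo} yields empty interior only residually, not on an open dense set. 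The paper's actual mechanism for the upgrade is the part your proposal omits: take periodic points $p,q\in\Lambda_f(U)$ of indices $d^s$ and $d^s+1$; for $g\in\mathcal J_0$ show that $W^s_\varepsilon(q_g,g)\cap\Lambda_g(U)$ has empty interior inside $W^s_\varepsilon(q_g,g)$ (otherwise saturating by strong unstable leaves, which lie in the attractor, gives nonempty interior, contradicting Theorem~\ref{Tlzo}); use upper semicontinuity of $g\mapsto W^s_\varepsilon(q_g,g)\cap\Lambda_g(U)$ to make $W^s_\varepsilon(q_g,g)\not\subset\Lambda_g(U)$ an \emph{open} condition holding on an open dense subset $\mathcal V$; and then, via items (3) and (5) of Proposition~\ref{unif}, conclude that for $g\in\mathcal V$ the attractor contains \emph{no strong stable disk at all} (a stable disk would force $W^s(q_g,g)\subset\Lambda_g(U)$), hence has empty interior.

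The second gap is the core of item~(2). You reduce, correctly, to showing $m(\Lambda_g(U))=0$ for $C^{1+\alpha}$ diffeomorphisms $g\in\mathcal V$, but then sketch a Bowen-type density-point argument with absolute continuity and bounded distortion and yourself flag the decisive propagation step as ``where the real work lies''; as written, that step is not proved, and it is exactly the hard analytic content. The paper sidesteps it entirely by invoking the Alves--Pinheiro theorem (Proposition~\ref{lcompletar}, Corollary~B of \cite{B6}): a partially hyperbolic set of positive volume for a $C^{1+\alpha}$ diffeomorphism must contain a strong stable disk. Since the open dense set $\mathcal V$ built for item~(1) consists precisely of diffeomorphisms whose attractor contains no strong stable disk, positive measure is immediately contradicted, and the residual set $\mathcal W$ then comes from your (correct) semicontinuity argument. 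So the missing key lemma is ``positive volume implies a strong stable disk''; without it, or a completed version of your density-point scheme, item~(2) and the ``moreover'' clause remain unestablished.
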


Observe that item (1) of Theorem C is stronger than Corollary B, as we get robustly empty interior even if the attractor is not robustly transitive. Unfortunately, this is only obtained for the $s$-minimal case.

Finally, we state a spectral decomposition theorem  for $s$- and $u$-minimal homoclinic classes. Here the term \emph{minimal constant} stands for  the smallest number $d$ verifying the definition of a dynamically minimal lamination (see Definition \ref{def.minimal}). We denote by $H(p,f)$ the homoclinic class of the hyperbolic periodic point $p$ and by $\operatorname{index}(p)$  the dimension of the stable manifold of $p$.

\begin{bigteo} \label{TG}  Let $\Lambda=H(p,f)$ be an $s$-minimal (resp. $u$-minimal) isolated partially hyperbolic homoclinic class with minimal constant $d$  and $\operatorname{index}(p)= dim(E^ s)$ (resp. $\operatorname{index}(p)= dim (E^s) + dim(E^c)$). Then $\Lambda$ admits a unique spectral decomposition with exactly $d$ components.  
 \end{bigteo}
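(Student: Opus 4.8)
The plan is to establish Theorem~\ref{TG} by first producing a natural candidate for the spectral decomposition, then showing it has exactly $d$ pieces, and finally that any spectral decomposition must coincide with it. The starting point is the dynamical minimality of $\mathcal{F}^s$ together with the index hypothesis $\operatorname{index}(p) = \dim(E^s)$: this means the strong stable lamination $\mathcal{F}^s$ integrates the full stable bundle of the periodic orbit of $p$, so stable manifolds of periodic points inside $\Lambda$ are (locally) strong stable leaves. I would fix a leaf $L$ through $p$, consider the first-return structure of its orbit to a small neighborhood, and use the minimal constant $d$ to partition $\Lambda$ into the closures $\Lambda_0,\dots,\Lambda_{d-1}$ of the pieces of the orbit of $L$ that return densely in a cyclic fashion — this is the standard way a minimal constant produces a cyclic decomposition, analogous to how the period of a point produces cyclically permuted pieces of a homoclinic class in the hyperbolic setting.

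**Next I would** verify that each $\Lambda_i$ is itself a homoclinic class: since $\Lambda = H(p,f)$ is a homoclinic class and the $\Lambda_i$ are the closures of the orbit segments of a dense leaf, each $\Lambda_i$ contains a periodic point homoclinically related (within $\Lambda_i$) to the others in its orbit under $f^d$, so $\Lambda_i = H(p_i, f^d)$ for an appropriate $p_i$, and $f$ cyclically permutes the $\Lambda_i$. Transitivity of $f^d$ on each $\Lambda_i$ follows from the $s$-minimality descending to the sublamination, and the fact that the $\Lambda_i$ are pairwise disjoint (or overlap only on a "small" set that can be absorbed) is where the isolatedness hypothesis enters: local maximality prevents the pieces from being glued together by stray intersections. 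I expect here to need Proposition~\ref{unif2} or the uniform estimates quoted from \cite{p1} to control how the strong stable leaves sit inside $\Lambda$ and to guarantee that the $d$ pieces are genuinely distinct rather than artificially split.

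**For uniqueness**, I would argue that any spectral decomposition of $\Lambda$ into homoclinic classes must refine, hence equal, the one just constructed: a homoclinic class is chain-transitive and $f$-invariant up to cyclic permutation, and the $s$-minimality forces each component to meet the orbit of $L$ densely, pinning down its index and forcing it to be one of the $\Lambda_i$; two distinct spectral decompositions would then produce two distinct cyclic partitions of the same minimal orbit structure, contradicting the minimality of the constant $d$. The number of components is exactly $d$ because fewer would contradict that $d$ is the *smallest* constant realizing dynamical minimality, and more would contradict transitivity of $f$ on $\Lambda$ together with the defining property of $d$.

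**The main obstacle** I anticipate is showing the $d$ pieces are pairwise disjoint (or meet in a negligible, dynamically irrelevant set) — in the non-hyperbolic partially hyperbolic world one does not have a local product structure, so the usual hyperbolic argument that distinct basic pieces are disjoint by shadowing is unavailable. I would handle this by exploiting the isolated (locally maximal) hypothesis together with the lamination structure: if two pieces $\Lambda_i$ and $\Lambda_j$ shared a point $x$, then through $x$ the strong stable leaf would have its forward orbit dense in both, which when combined with the cyclic permutation $f(\Lambda_i) = \Lambda_{i+1}$ forces $i \equiv j$. Making this rigorous requires care about the difference between the leaf through $x$ and its saturation, and about semicontinuity of the pieces under taking closures; this bookkeeping, rather than any single hard estimate, is where the real work lies.
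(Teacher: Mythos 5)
Your candidate decomposition is the right one---it is exactly the paper's choice $\Lambda_i = f^i\bigl(\overline{\mathcal{F}^s_\Lambda(x)}\bigr)$---and your overall architecture (build the pieces from iterates of a leaf, check the cyclic permutation, disjointness, and mixing of $f^d$) matches the paper's. However, the two steps you yourself flag as the hard ones are exactly where your sketch has no working mechanism, and the tool that makes them work is absent from your proposal. The paper's engine is a \emph{restriction lemma} (Lemma~\ref{l.restriction0}): if $\mathcal{F}^s(x)\subset\overline{\mathcal{F}^s(y)}$ then $\mathcal{F}^s_\Lambda(x)\subset\overline{\mathcal{F}^s_\Lambda(y)}$. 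This is precisely the passage from the full leaf to the leaf intersected with $\Lambda$ that you dismiss as ``bookkeeping''; it is in fact the central difficulty, and it is where the hypothesis $\operatorname{index}(p)=\dim E^s$ is really used: strong stable disks accumulate on $p$ (by $s$-minimality and Lemma~\ref{s}), and since $W^u(p)$ has complementary dimension it meets every such disk transversally (Lemma~\ref{bom}), producing transverse homoclinic points of $p$---hence points of $\Lambda=H(p,f)$---arbitrarily close to any prescribed point of any strong stable disk. Your reading of the index hypothesis (``stable manifolds of periodic points are locally strong stable leaves'') is true but is not what drives the proof, and isolatedness is not what keeps the pieces apart.

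Concretely, two of your steps do not close without this lemma. First, disjointness: if $z\in\overline{\mathcal{F}^s_\Lambda(f^i(x))}\cap\overline{\mathcal{F}^s_\Lambda(f^j(x))}$ with $i<j$, the paper uses the restriction lemma to place $\mathcal{F}^s_\Lambda(z)$ inside both closures and then, chaining the inclusions, covers $\Lambda$ by $\max\{j-i,\,d-j+i\}<d$ closures based at $w=f^j(x)$; this contradicts minimality only after one knows the second point. Second, the count ``exactly $d$'': the definition of the minimal constant is uniform over all base points, so the fact that \emph{some} point admits a covering by $k<d$ closures is not an immediate contradiction with the definition; one needs the homogenization statement (Lemma~\ref{min}) that if $k$ closures suffice for one $x$ then $k$ closures suffice for every $y$---again proved with the restriction lemma. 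Your phrase ``forces $i\equiv j$'' for disjointness and your claim that ``fewer would contradict that $d$ is the smallest constant'' both presuppose these two lemmas without supplying them, so as written the argument has a genuine gap at its core.
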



As a consequence of Theorem B in \cite{p1}, we obtain a robust spectral decomposition for robustly transitive attractors, meaning that every $g$ in a small neighborhood of $f$ has a spectral decomposition whose pieces are the continuations of the pieces in the spectral decomposition of $\Lambda_f$.

\begin{bigteo} \label{TH}
There is a residual subset $\mathcal{R}$ of $\operatorname{Diff}^1(M)$ satisfying the following. For every $f \in \mathcal{R}$ and $U \subset M$, if $\Lambda_f(U)$ is a partially hyperbolic robustly transitive attractor with one-dimensional center bundle, then $\Lambda_f(U)$ has a robust spectral decomposition.
\end{bigteo}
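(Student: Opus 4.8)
The plan is to deduce Theorem~\ref{TH} from Theorem~\ref{TG} together with the robust structural information contained in Theorem B of \cite{p1} (equivalently, item (2) of Proposition~\ref{unif2}), in the same spirit in which Corollary~\ref{Clzo} follows from Theorem~\ref{Tlzo}. First I would fix $\mathcal{R}$ to be the intersection of the residual set furnished by Theorem B of \cite{p1} with a residual set on which hyperbolic periodic orbits and their homoclinic classes have well-behaved continuations (the generic spectral-decomposition results of \cite{B21,B34} can be used here for the bookkeeping). The point of this choice is the following: if $f\in\mathcal{R}$ and $U\subset M$ is such that $\Lambda_f(U)$ is a partially hyperbolic robustly transitive attractor with one-dimensional center bundle, then Theorem B of \cite{p1} provides a hyperbolic periodic point $p$ and a $C^1$-neighborhood $\mathcal{U}$ of $f$ such that, for every $g\in\mathcal{U}$, the maximal invariant set $\Lambda_g(U)$ is a partially hyperbolic homoclinic class $H(p_g,g)$ — with $p_g$ the continuation of $p$ — which is $s$-minimal (resp. $u$-minimal) with $\operatorname{index}(p_g)=\dim(E^s)$ (resp. $\operatorname{index}(p_g)=\dim(E^s)+\dim(E^c)$). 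Since $U$ is a trapping region, $\Lambda_g(U)$ is isolated, and partial hyperbolicity is $C^1$-robust; hence \emph{every} hypothesis of Theorem~\ref{TG} holds for each $g\in\mathcal{U}$, not merely for $f$.

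The second step is to control the minimal constant along $\mathcal{U}$: after shrinking $\mathcal{U}$, the minimal constant of the dynamically minimal lamination of $\Lambda_g(U)$ should equal the minimal constant $d$ of $\Lambda_f(U)$ for all $g\in\mathcal{U}$. One inequality is soft: the constant $d$ for $f$ yields $d$ pairwise disjoint compact sets $\Lambda_0,\dots,\Lambda_{d-1}$ with $f(\Lambda_i)=\Lambda_{i+1\bmod d}$, and disjointness of compact sets is $C^1$-open, so the cyclic period of the decomposition of $\Lambda_g(U)$ cannot fall below $d$. The reverse inequality — that it does not increase — is where robustness of $s$/$u$-minimality and robust transitivity are used: since $\Lambda_g(U)=H(p_g,g)$ is transitive, any $g^d$-invariant closed proper piece carrying the orbit of $p_g$ must be the continuation of $\Lambda_0$, which forces exactly $d$ pieces. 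Granting this, Theorem~\ref{TG} applied to each $g\in\mathcal{U}$ gives, for every $g\in\mathcal{U}$, a unique spectral decomposition $\Lambda_g(U)=\bigcup_{i=0}^{d-1}X_i^g$ with $g(X_i^g)=X_{i+1\bmod d}^g$ and each $X_i^g$ a $g^d$-homoclinic class.

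The third step identifies $X_i^g$ as the continuation of $X_i^f$. From the proof of Theorem~\ref{TG} one may choose in $X_0^f$ a hyperbolic periodic point $q$ (up to relabeling, $q=p$, since $d$ divides the period of $p$) with $X_0^f=H(q,f^d)$; its continuation $q_g$ is a hyperbolic periodic point of $g^d$ contained in $\Lambda_g(U)$, and by uniqueness of the spectral decomposition together with the robustness established in the second step, $q_g$ lies in a single piece, which must be $X_0^g=H(q_g,g^d)$. Lower semicontinuity of $g\mapsto H(q_g,g^d)$ at $q_g$, combined with the fact that $X_0^g$ is trapped between the (disjoint, hence continuous) pieces of $\Lambda_g(U)$, yields Hausdorff-continuity of $g\mapsto X_0^g$, so $X_0^g$ is the continuation of $X_0^f$; applying $g^i$ and the permutation relation gives $X_i^g$ as the continuation of $X_i^f$. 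This is precisely the statement that $\Lambda_f(U)$ admits a robust spectral decomposition.

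The main obstacle is the second step: showing that the number of pieces (the minimal constant) is robustly equal to $d$, which is equivalent to the continuity of the pieces and not merely their existence. Bounding the number of pieces from above is delicate because homoclinic classes are in general only lower semicontinuous; the argument must exploit simultaneously that $\Lambda_g(U)$ is an attractor (so its strong unstable, resp. strong stable, leaves remain inside it), partially hyperbolic, robustly transitive, and $s$/$u$-minimal — the same package of hypotheses that powers Theorem~\ref{TG} — in order to rule out a spurious decomposition into more than $d$ pieces. Everything else (isolation from the trapping region, $C^1$-robustness of partial hyperbolicity, continuation of hyperbolic periodic orbits, and the genericity bookkeeping) is routine.
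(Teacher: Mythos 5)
Your overall architecture matches the paper's: reduce to the robustly $s$- or $u$-minimal case via item (2) of Proposition~\ref{unif2} (so that $\Lambda_g(U)=H(p_g,g)$ is a homoclinic class varying continuously with $g$ and Theorem~\ref{TG} applies to every nearby $g$), show that the minimal constant is locally constant, and identify the pieces for $g$ with the continuations of the pieces for $f$. Your lower bound $d_g\ge d$, via persistence of the $d$ pairwise disjoint cyclically permuted compacta, is also exactly the paper's argument.

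However, there is a genuine gap at the step you yourself flag as the main obstacle: the upper bound $d_g\le d$. The justification you offer --- that since $\Lambda_g(U)=H(p_g,g)$ is transitive, ``any $g^d$-invariant closed proper piece carrying the orbit of $p_g$ must be the continuation of $\Lambda_0$'' --- does not work. Transitivity is built into the definition of a spectral decomposition and is perfectly compatible with the decomposition of $\Lambda_g(U)$ refining the continuations of $\Lambda_0,\dots,\Lambda_{d-1}$ into $kd$ pieces with $k>1$: by Remark~\ref{r.multiple} the period of $p_g$ is a multiple of the number of pieces, and since that period is a multiple of $d$ nothing a priori prevents each $\Lambda_i$ from splitting further. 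The paper closes this gap with Theorem~\ref{t.same.d}, whose mechanism you do not supply. Writing the period of $p$ as $nd$, the spectral decomposition for $f$ gives $\overline{\mathcal{F}^{s}_{\Lambda}(f^{i}(p))}=\overline{\mathcal{F}^{s}_{\Lambda}(f^{jd+i}(p))}$ for all $j$; since $\operatorname{index}(p)=d^{s}$, the transversality argument behind Lemma~\ref{bom} (Lemma 5.3 and Remark 5.5 of \cite{p1}) converts these equalities into transverse intersections of $\mathcal{F}^{s}(f^{i}(p))$ with the unstable manifolds of the points $f^{jd+i}(p)$; these intersections persist under $C^1$-perturbation, and the $\lambda$-lemma then forces $\overline{\mathcal{F}^{s}_{\Lambda_g}(g^{i}(p_g))}=\overline{\mathcal{F}^{s}_{\Lambda_g}(g^{jd+i}(p_g))}$ for every $g$ near $f$. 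Hence $d$ iterates of the strong stable leaf of $p_g$ already cover $\Lambda_g(U)$, and Lemma~\ref{min} yields $d_g\le d$. Without this (or an equivalent) argument the proof is incomplete. Once $d_g=d$ is established, your third step is essentially the paper's concluding sentence, and is in fact simpler than you make it: continuity of $g\mapsto\Lambda_g(U)$ is already guaranteed by item (2) of Proposition~\ref{unif2} (Corollary 4.9 of \cite{p1}), so the $d$ pieces for $g$ must coincide with the continuations of the $d$ pieces for $f$.
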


 This paper is organized as follows. In Section 2 we give the basic definitions, terminology, and state some results we use along the paper.  Theorem A is proved in subsection 3.1, Thorem C is proved in section 3.2, and  Theorems D and E are proved in section 4.

\section{Preliminaries} \label{pre}
\medskip 

Let $M$ be a Riemannian compact manifold without boundary and, for $r \geq 1$, let $\operatorname{Diff}^{r}(M)$ be the space of $C^r$ diffeomorphisms from $M$ to itself endowed with the $C^r$-topology.

Given $f \in \operatorname{Diff}^{1}(M)$ and an open subset $U$ of $M$, we define the maximal $f$-invariant set of $f$ in $U$ by 
$$
\Lambda_f(U):=\bigcap_{n \in \mathbb{Z}} f^n(U).
$$ 

When a compact set $\Lambda$ is the maximal $f$-invariant set of some open set $U \subset M$, we say that $\Lambda$ is an \emph{isolated} set. Isolated sets vary upper semicontinuously. By an abuse of terminology, we call the set $\Lambda_g(U)$ the \emph{continuation} of the set $\Lambda_f(U)$ when $g$ varies in a small neighborhood of~$f$.

A special kind of isolated set are \emph{attractors}. We say that a set  $\Lambda$ is an attractor if there is an open set $U \subset M$ such that $\Lambda = \bigcap_{n \in \mathbb{N}} f^n(U)$ and $f(\overline{U}) \subset U$. Observe that  $M$ itself is an attractor (by taking $U =M$). The interesting case is when  $\Lambda \ne M$, when  $\Lambda$  is called a  \emph{proper} attractor.

In this work we study isolated sets with highly recurrent dynamics. We say that a set  $\Lambda$ is \emph{transitive} if there is $ x \in \Lambda$ such that its forward orbit $\mathcal{O}^+_f(x)$ is dense in $\Lambda$. In our setting, this is equivalent to the following property: For any pair $V_1,V_2$ of (relative) non-empty open sets of $\Lambda$, there is $n \in \mathbb{Z}$ such that $f^n(V_1) \cap V_2 \not= \emptyset$. A stronger recurrence property is the \emph{mixing} property: For any pair $V_1,V_2$ of (relative) open sets of $\Lambda$, there is $n \in \mathbb{N}$ such that $f^m(V_1) \cap V_2 \not= \emptyset$ for all $m \geq n$.\smallskip

We speak of a \emph{robustly} transitive set $\Lambda = \Lambda_f(U)$ when $\Lambda$ is transitive and the transitivity is also verified for the continuations $\Lambda_g(U)$ of every $g$ in a small neighborhood $\mathcal{U}$ of $f$.  If the transitivity is verified only in a residual subset of $\mathcal{U}$, then we say that $\Lambda$ is a \emph{generically transitive set}. \smallskip

In our context the isolated sets $\Lambda$ are always assumed to be partially hyperbolic with $E^s \oplus E^c \oplus E^u$ denoting the partially hyperbolic splitting of  $T_{\Lambda}M$. The values of $dim(E^s)$, $dim(E^c)$ and $dim(E^u)$, are designated by $d^s, d^c$, and $d^u$, respectively. We also assume that $\Lambda$ is robustly non-hyperbolic, meaning that $E^c$ does not have uniform contraction nor expansion in a robust way. We also require that none of the three bundles are trivial, in which case the set is strongly partially hyperbolic. See  Appendix B of \cite{B3} for a list of elementary properties and a more complete view on this topic.  
\smallskip

Partial hyperbolicity leads to the existence of dynamically defined immersed submanifolds $\mathcal{F}^s(x)$ and $\mathcal{F}^u(x)$,  through each point $x$ in the set, tangent to the stable and unstable subbundles, respectively. The set of such submanifolds are known as the stable and unstable \emph{lamination} of the set and are denoted by $\mathcal{F}^s$ and $\mathcal{F}^u$, respectively.  We direct the reader to section 3 of \cite{p1}, where the precise definition and  main properties of these laminations are provided. 
\smallskip

When dealing with perturbations of a diffeomorphism, as in the case of the continuations of isolated sets, we need to specify in these notations which diffeomorphism we are referring to. So, let $\Lambda_f(U)$ be an isolated partially hyperbolic set and  $\mathcal{U}$ be a neighborhood  of $f$ such that, for every $g \in \mathcal{U}$, the set $\Lambda_g(U)$ is partially hyperbolic with the same  bundles dimensions. We denote by $\mathcal{F}^{s}(g)$ and by $\mathcal{F}^{s}(x,g)$, respectively, the strong stable lamination of $\Lambda_g(U)$ (with respect to the partial hyperbolicity of $g$) and the leaf of this foliation that contains $x$.  Similarly,  given a hyperbolic periodic point $x \in \Lambda_g(U)$ and $\varepsilon>0$, we denote by $W^s_{\varepsilon}(x, g)$ and $W^s(x,g)$ the local stable manifold (of size $\varepsilon$) and the global stable manifolds of $x$, respectively. The union of all local or all global stable manifolds along the orbit of $x$ is denoted by $W^s_{\varepsilon}(\mathcal{O}_g(x),g)$ and $W^s(\mathcal{O}_g(x),g)$, respectively. Similarly, fixed $r>0$, we denote by $\mathcal{F}^{s}_r (x)$ the open ball of radius $r$ centered at $x$, relative to the induced distance on $\mathcal{F}^{s}(x)$. When there is no risk of misunderstanding, we simplify these notation by omitting the diffeomorphism,  as $\mathcal{F}^s(x)$ for $\mathcal{F}^s(x, f)$, $W^s(x)$ for $W^s(x, f)$, and $W^s_{\varepsilon}(\mathcal{O}_g(x))$ for $W^s_{\varepsilon}(\mathcal{O}_g(x),g)$.
\smallskip

Similar notations are considered for the unstable foliation and  manifold.

\begin{de} \label{d.saturation} \em{The \emph{saturation} of a set $K$ by a lamination $\mathcal{F}$ is the set consisting of the union of all the leaves  passing through some point of $K$. A set $K$ is \emph{saturated} by $\mathcal{F}$ if the saturation of $K$ equals $K$ (i.e, for every $x \in K$ we have $\mathcal{F}(x) \subset K$). }
\end{de}

\begin{re} \label{r.ss+1} \em{Let $\Lambda$ be a partially hyperbolic set. For every hyperbolic periodic point $p \in \Lambda$, the \emph{index} of $p$ is the dimension of $W^s(p)$ as a submanifold and is denoted by $\operatorname{index}(p)$. Since $\mathcal{F}^{s}(p)$ is a subset of  $W^s(p)$,  we have $\operatorname{index}(p) \geq d^s$. Analogously,  the strong unstable leaf of $p$ is a subset of $W^u(p)$ so $d^s+d^c+d^u - \operatorname{index}(p) \geq  d^u$. In particular, when the central bundle is one-dimensional ($d^c=1$), the index of a hyperbolic periodic point $p$ is either $d^s$ or $d^s+1$.}
\end{re}

Following \cite{p1}, given a diffeomorphism $f$ and an isolated set $\Lambda = \Lambda_f(U)$, we define the concept of \emph{compatible neighbourhood} of $f$, where the continuations of $\Lambda_f(U)$  share it main properties. 

\begin{de} \label{d.compatible}
\em{Let $\Lambda$ be an isolated set of a diffeomorphism $f \in \operatorname{Diff}^{1}(M)$ and $U \subset M$ an isolated block of $\Lambda$. We call a neighborhood $\mathcal{U}$ of $f$ a \emph{compatible} neighborhood (with respect to $U$) if $\mathcal{U}$ is sufficiently small so that, for all $g \in \mathcal{U}$: 

\begin{itemize}

\item the set $\Lambda_g(U)$ is isolated;

\item if $\Lambda_f(U)$ is an attractor of $f$, then $\Lambda_g(U)$ is an attractor of $g$;

\item if $\Lambda_f(U)$ is a partially hyperbolic set then $\Lambda_g(U)$ is a partially hyperbolic set of $g$, with the same bundles dimensions;

\item if $\Lambda_f(U)$ is a generically (resp. robustly) transitive set of $f$, then $\Lambda_g(U)$ is a generically (resp. robustly) transitive set of $g$.

\end{itemize} }

\end{de}

\subsection{Generic Isolated Sets and Attractors}\
\medskip 

In this section we gather some useful results that we invoke along our proofs. They were stablished in \cite{B4, B34, B12, p1, B16}. For convenience, we restate them here in a compact form.


\begin{pr} \label{unif} There is a residual subset $\mathcal{R}$ of $\operatorname{Diff}^{1}(M)$ such that, for every $f \in \mathcal{R}$ and every isolated set $\Lambda_f(U)$, it hold:\\
\begin{enumerate}

\item if $\Lambda_f(U)$ is a transitive attractor, then there is a neighborhood $\mathcal{U}$ of $f$ such that, 
for every $g \in \mathcal{R} \cap \mathcal{U}$, the set $\Lambda_g(U)$ is a transitive attractor. \smallskip

\item  if $\Lambda_f(U)$ is non-hyperbolic, then it 
contains a pair of (hyperbolic) saddles of different indices.\medskip

\item if $\Lambda_f(U)$ is 
a transitive isolated set of $f$ that is  
partially hyperbolic with one-dimensional center bundle, then
for every pair of hyperbolic periodic points 
$p,q \in \Lambda_f(U)$ with indices $d^s$ and $d^s+1$, 
respectively,
there is an open set $\mathcal{V}_{p,q} \subset \operatorname{Diff}^{1}(M)$, 
with $f \in \overline{\mathcal{V}_{p,q}}$, satisfying:

\noindent $W^s(\mathcal{O}_g(q_g)) \subset \overline{W^s(\mathcal{O}_g(p_g))}\,$ and $\, W^u(\mathcal{O}_g(p_g)) \subset \overline{W^u(\mathcal{O}_g(q_g))}$  for every $g \in \mathcal{V}_{p,q}$. Moreover, if $\Lambda_f(U)$ is robustly transitive, then $\Lambda_g(U) \subset H(p_g,g)$.\medskip

\item if\, $\Gamma =H(p,f) $ is a partially hyperbolic homoclinic class,  then there is an extension of the partially hyperbolic splitting on $\Gamma$ to a continuous splitting on a compact neighborhood $W$ of $\Gamma$ such that it is invariant in the following sense: for every $x \in W$ with $f(x) \in W$, we have that $Df_x (E^i (x))= E^i(f(x)) \mbox{, for any}\: \;i \in\{s,c,u\}$. \medskip

\item if $\Lambda_f(U)$ is an $s$-minimal partially hyperbolic set with one-dimensional center bundle and
 $\mathcal{U}$ is a compatible neighborhood of $f$, then
for every hyperbolic periodic point $p\in \Lambda_f(U)$, 
there is an open set $\mathcal{W}_p \subset \mathcal{U}$, 
with $f \in \overline{\mathcal{W}_p}$, 
such that $H(p_g,g) \subset \overline{\mathcal{O}^-_g(D)}$  for  every strong stable disk $D$ centered at some point $x\in \Lambda_g(U)$ and every $g \in \mathcal{W}_p$. Moreover, if $\operatorname{index}(p)=d^s$, then $\mathcal{W}_p$ is a neighborhood of $f$.\medskip

\end{enumerate}
\end{pr}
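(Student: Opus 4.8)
The plan is to obtain the statement by intersecting the residual sets furnished by the $C^1$-generic theory developed in \cite{B4,B34,B12,p1,B16}, combined with a countable-basis argument on the isolating open set $U$ so that $\mathcal{R}$ is uniform in $U$; below I recall the mechanism behind each item rather than reproduce the proofs.

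For item (1), I would first note that the trapping condition $g(\overline{U})\subset U$ defining an attractor is $C^0$-open, so on a neighborhood $\mathcal{U}$ of $f$ the continuation $\Lambda_g(U)=\bigcap_{n\ge 0}g^n(U)$ is still an attractor. Since $g\mapsto\Lambda_g(U)$ is upper semicontinuous it is continuous on a residual set, and $C^1$-generically an attractor is a homoclinic class, hence transitive (via the connecting lemma for pseudo-orbits). Combining the continuity of the continuation with the generic lower semicontinuity of homoclinic classes, the continuation $\Lambda_g(U)$ remains a homoclinic class, and hence transitive, for every $g\in\mathcal{R}\cap\mathcal{U}$.

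For item (2), the input is the $C^1$-generic dichotomy for isolated sets: either $\Lambda_f(U)$ is hyperbolic, or the failure of uniform contraction/expansion of some bundle yields an invariant measure whose Lyapunov spectrum is not uniformly hyperbolic, and the ergodic closing lemma together with Franks' and Hayashi's lemmas upgrade this to a pair of hyperbolic periodic saddles inside $\Lambda_f(U)$ with indices differing by one. Items (3) and (5) are established in the companion paper \cite{p1} and announced in the introduction, so here I would simply invoke them; their proofs produce robust heterodimensional cycles (for item (3), via blender-type arguments) and propagate strong stable disks backwards through the homoclinic class using $s$-minimality (for item (5)). Item (4) is the standard persistence of dominated splittings: the dominated splitting over the compact invariant set $\Gamma$ extends continuously to a dominated splitting over a compact neighborhood $W$ of $\Gamma$, and after shrinking $W$ one obtains the one-step invariance $Df_x(E^i(x))=E^i(f(x))$ for $i\in\{s,c,u\}$ whenever $x,f(x)\in W$; see Appendix B of \cite{B3}.

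The genuinely delicate points are items (2) and (3): manufacturing hyperbolic saddles of prescribed consecutive indices and, above all, producing the \emph{robust} one-sided inclusions $W^s(\mathcal{O}_g(q_g))\subset\overline{W^s(\mathcal{O}_g(p_g))}$ and $W^u(\mathcal{O}_g(p_g))\subset\overline{W^u(\mathcal{O}_g(q_g))}$ --- this is where the $C^1$ connecting-lemma technology and the construction of blenders are needed. The remaining items are either soft (openness of trapping regions and upper semicontinuity of isolated sets for item (1)) or a matter of the linear algebra of dominated splittings (item (4)).
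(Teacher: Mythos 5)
Your proposal matches the paper's treatment: the paper proves Proposition~\ref{unif} purely by citation (item (1) is Theorem~B of \cite{B4}, item (2) comes from Ma\~n\'e's Ergodic Closing Lemma \cite{B16}, items (3) and (5) are Propositions 4.8 and Lemma 9.4 of \cite{p1}, and item (4) is Theorem 5.1 of \cite{p1}, combining \cite{B12} and \cite{B34}), which is exactly the route you take, with your sketches of the underlying mechanisms being accurate supplementary commentary. No gap.
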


Item (1) is theorem B of [1]; item (2) is due to Mane in the proof of the Ergodic Closing
Lemma \cite{B16};  item (3) is Proposition 4.8 in \cite{p1}; item (4) is Theorem 5.1 in \cite{p1} (which is a combination of Theorem 7 in \cite{B12}  and Remark 1.10 in \cite{B34}); and item (5) is Lemma 9.4 in \cite{p1}.

In the rest of this paper,  $\mathcal{R}$ always refers to the residual subset in Proposition \ref{unif}.

Fixed an open set $U \subset M$, denote by $\mathrm{RTPHA}_1(U)$ (resp. $\mathrm{GTPHA}_1(U)$) the subset of $\operatorname{Diff}^{1}(M)$ of diffeomorphisms $f$ for which the maximal $f$-invariant subset $\Lambda_f(U)$ of $U$ is a robustly (resp. generically) transitive attractor that is robustly non-hyperbolic and partially hyperbolic with one-dimensional center bundle. Observe that $\mathrm{RTPHA}_1(U)$ is an open subset of $\operatorname{Diff}^1(M)$, and that $\mathrm{GTPHA}_1(U)$ is locally residual in $\operatorname{Diff}^1(M)$. \\

Next proposition summarises Theorem A, Theorem B,  and Corollary 4.9 in \cite{p1}.

\begin{pr}[\cite{p1}] \label{unif2} For every open subset $U \subset M$, there is a residual subset $\mathcal{A}$ of $\mathrm{GTPHA}_1(U)$ and an open and dense subset $\mathcal{B}$ of $\mathrm{RTPHA}_1(U)$ such that:
\begin{enumerate}
\item for every $g \in \mathcal{A}$, the set $\Lambda_g(U)$ is either generically $s$-minimal or generically $u$-minimal.
\item for every $g \in \mathcal{B}$, the attractor $\Lambda_g(U)$ is either robustly $s$-minimal or robustly $u$-minimal. Moreover, $\Lambda_g(U)$ is a homoclinic class and depends continuously on $g\in \mathcal{B}$. 

\end{enumerate}
\end{pr}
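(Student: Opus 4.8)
The statement assembles Theorems A and B together with Corollary 4.9 of \cite{p1}, so the plan is to organize the argument around three steps: first identify the attractor with a homoclinic class and produce its continuation, then establish the dichotomy between $s$- and $u$-minimality from the index structure of the periodic points, and finally upgrade the conclusion from a single diffeomorphism to a residual (resp.\ open and dense) set by Baire category and the openness of $\mathrm{RTPHA}_1(U)$.

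First I would fix $g$ in $\mathrm{RTPHA}_1(U)$ (resp.\ $\mathrm{GTPHA}_1(U)$) and locate the homoclinic class. Since $\Lambda_g(U)$ is robustly non-hyperbolic, item (2) of Proposition \ref{unif} provides hyperbolic periodic points of two distinct indices, which by Remark \ref{r.ss+1} must be $d^s$ and $d^s+1$ because $d^c=1$. For the robust case, item (3) of Proposition \ref{unif} then yields such a pair $p,q$ and an open set $\mathcal{V}_{p,q}$ accumulating on $g$ on which $\Lambda_h(U)\subset H(p_h,h)$; since $p\in\Lambda_h(U)$ forces $H(p_h,h)\subset\Lambda_h(U)$, equality holds, so the attractor is a homoclinic class, and the continuation $p_h$ of the hyperbolic point gives the continuous dependence of $\Lambda_h(U)=H(p_h,h)$ on $h$. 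This already supplies the homoclinic-class and continuity part of item (2) on an open and dense set.

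The core step is the minimality dichotomy. For an attractor the strong unstable leaves satisfy $\mathcal{F}^u(x)\subset W^u(x)\subset\Lambda_g(U)$, while the strong stable leaves escape into the basin, so the two laminations must be treated asymmetrically. The key observation is that, with $d^c=1$, a periodic point of index $d^s+1=d^s+d^c$ has $\dim W^u=d^u$, whence $W^u(p)=\mathcal{F}^u(p)$, while a point of index $d^s$ has $\dim W^s=d^s$, whence $W^s(p)=\mathcal{F}^s(p)$. The heart of \cite{p1} is to show that, for the periodic points carrying the density of the homoclinic class, exactly one of these coincidences is available, and that density of the corresponding full invariant manifold in $H(p,g)$ forces dynamical minimality of the corresponding strong lamination, with minimal constant equal to the number of pieces in its cyclic structure. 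This is where the connecting-lemma arguments of \cite{p1}, together with items (3) and (5) of Proposition \ref{unif}, are invoked to promote density of a single dense leaf to dynamical minimality of the whole lamination.

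Finally I would globalize. In the robust case the density producing the minimality is an open condition along $\mathcal{V}_{p,q}$, so intersecting the finitely many relevant open sets with $\mathrm{RTPHA}_1(U)$ yields the open and dense set $\mathcal{B}$ on which $\Lambda_g(U)$ is robustly $s$- or $u$-minimal, while the continuous dependence follows from the continuation of $p$ above. In the generic case transitivity holds only on a residual set, so the same density arguments are carried out there, producing the residual set $\mathcal{A}$ of item (1). The hard part will be the core step: deciding which of the two alternatives actually occurs for a given $g$ and converting density of one coincident strong invariant manifold into dynamical minimality of the full lamination with a uniform, perturbation-stable minimal constant, rather than mere density of a single leaf.
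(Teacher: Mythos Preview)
The paper does not prove this proposition at all: it is stated with the attribution ``[\cite{p1}]'' and the sentence immediately preceding it says that it ``summarises Theorem A, Theorem B, and Corollary 4.9 in \cite{p1}.'' So there is no proof in the paper to compare your proposal against; the statement is imported as a black box.

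Your sketch is a reasonable roadmap of how the argument in \cite{p1} is organized, and you correctly identify the key mechanism (when $d^c=1$, one of the strong leaves of a periodic point coincides with its full invariant manifold, and density of homoclinic points propagates this). Two cautions, though. First, your use of item (3) of Proposition~\ref{unif} to obtain $\Lambda_h(U)=H(p_h,h)$ and continuous dependence is circular in this context: that item is itself extracted from \cite{p1} (Proposition 4.8 there), so for a self-contained proof you would need to reprove it rather than invoke it. Second, the passage from ``one strong leaf is dense'' to ``every strong leaf has dense orbit with a uniform constant $d$'' is the genuine content of \cite{p1}; your sketch names the ingredients (connecting lemma, index structure) but does not indicate how the uniformity in $d$ is obtained or why the dichotomy is exclusive and stable under perturbation. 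Since the present paper only cites the result, this level of detail is appropriate here, but if you intend this as an independent proof you should be aware that the core step you flag as ``the hard part'' is indeed where all the work lies.
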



\smallskip

\subsection{Lebesgue Measure and Genericity} \
\medskip

In what follows we consider the manifold $M$ endowed with a Lebesgue measure
$m$. We see how Lebesgue measure  behaves for the perturbations of an isolated set.  Observe that every isolated set $\Lambda_f(U)$ is $m$-measurable, as it is a contable intersection of open sets.\\ 

\begin{lema}\label{>} 
Let $f$ be a diffeomorphism in $\operatorname{Diff}^{1}(M)$, $\Lambda_f(U)$ be an isolated set, and $\mathcal{U}$ be a compatible neighborhood of $f$ with respect to $\Lambda_f(U)$. The map  $ \varphi \colon \mathcal{U}\to \mathbb{R}$ defined  by $\ \varphi (g)= m(\Lambda_g(U))$ is upper semicontinuous. Consequently, the set of continuity points of the map $\varphi$
is a residual subset of~$\mathcal{U}$. 
\end{lema}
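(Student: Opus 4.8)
The plan is to prove upper semicontinuity directly from the definition of $\Lambda_g(U)$ as a decreasing intersection of compact sets, using the fact that isolated sets vary upper semicontinuously in the Hausdorff topology together with outer regularity of Lebesgue measure. Fix $g_0 \in \mathcal{U}$ and $\varepsilon > 0$. By outer regularity of $m$, there is an open set $O$ with $\Lambda_{g_0}(U) \subset O$ and $m(O) < m(\Lambda_{g_0}(U)) + \varepsilon$. The core claim is that for all $g$ in a small enough neighborhood of $g_0$ we have $\Lambda_g(U) \subset O$, which immediately gives $\varphi(g) = m(\Lambda_g(U)) \le m(O) < \varphi(g_0) + \varepsilon$ and hence upper semicontinuity at $g_0$.

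To establish that inclusion, I would use that $\Lambda_{g_0}(U) = \bigcap_{n \in \mathbb{Z}} g_0^n(U)$, so the nested compact sets $K_N := \bigcap_{|n| \le N} \overline{g_0^n(U')}$ for a slightly smaller block $U' \Subset U$ (or just the compact sets $\bigcap_{|n|\le N} g_0^n(\overline{U})$, using that $U$ is an isolating block) decrease to $\Lambda_{g_0}(U) \subset O$; by compactness there is $N$ with $\bigcap_{|n| \le N} g_0^n(\overline{U}) \subset O$. Now $\bigcap_{|n| \le N} g^n(\overline{U})$ depends continuously on $g$ (it is a finite intersection of images of a fixed compact set under finitely many iterates, and $g \mapsto g^n$ is continuous in the $C^0$, hence uniform, topology on compacta), so for $g$ close to $g_0$ one still has $\bigcap_{|n| \le N} g^n(\overline{U}) \subset O$. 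Since $\Lambda_g(U) \subset \bigcap_{|n|\le N} g^n(\overline{U})$, this yields $\Lambda_g(U) \subset O$ as required. A small technical point to handle carefully: one should either work with an isolating block so that $\Lambda_g(U) = \bigcap_n g^n(\overline{U})$, or invoke the stated upper semicontinuity of isolated sets directly to get $\Lambda_g(U) \subset O$ for $g$ near $g_0$ — either route works, and the second is cleanest since the excerpt already states ``isolated sets vary upper semicontinuously.''

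The final sentence of the lemma is a general fact: any upper semicontinuous (more generally, any semicontinuous) real-valued function on a Baire space — and $\mathcal{U}$, being an open subset of the completely metrizable space $\operatorname{Diff}^1(M)$, is Baire — has a residual set of continuity points. I would cite this standard result, or sketch it: the discontinuity set of an upper semicontinuous $\varphi$ is $\bigcup_{k} \{g : \varphi \text{ oscillates by } > 1/k \text{ at } g\}$, and each such set is contained in a nowhere dense closed set because upper semicontinuity already controls $\limsup$, so only the $\liminf$ can jump, and the set where $\liminf \varphi < \varphi - 1/k$ is contained in the closure of the open set where that inequality is witnessed, whose complement is dense. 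I expect no serious obstacle here; the only thing requiring minor care is making sure the ``$\Lambda_g(U) \subset O$ for $g$ near $g_0$'' step is justified cleanly, which is exactly the upper semicontinuity of isolated sets already granted in the preliminaries.
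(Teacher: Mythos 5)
Your proposal is correct and follows essentially the same route as the paper: both arguments combine regularity of the Lebesgue measure (approximating $m(\Lambda_{g_0}(U))$ from above by the measure of a slightly larger open set) with the Hausdorff upper semicontinuity of the isolated set, the paper simply using the finite intersections $\Lambda(g,k)=\bigcap_{|n|\le k}g^n(U)$ as the approximating open sets instead of an arbitrary $O$ from outer regularity. The technical point you flag about $\overline{U}$ versus an isolating block is exactly the care the paper's proof also needs when asserting $\overline{\Lambda(g,N+N_0)}\subset\Lambda(g,N)$, and your fallback of invoking the stated upper semicontinuity of isolated sets resolves it cleanly.
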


\begin{proof}
Fix $g \in \mathcal{U}$ and consider the nested sequence of open sets $\Lambda(g,k) \vcentcolon= \bigcap_{n=-k}^k g^n(U).$ Clearly, $\Lambda(g,k) \searrow \Lambda_g(U)$ as $k \to \infty$. Since $m$ is a regular measure, we obtain  $\displaystyle \lim_{k \to \infty} m(\Lambda(g,k)) = m(\Lambda_g
(U))$. Thus, fixed $\varepsilon >0$, 
there is $N~=~N(g, \varepsilon) \in \mathbb{N}$ such that 
$$
m(\Lambda(g,k)) 
<m(\Lambda_g(U)) + \varepsilon = \varphi(g) + \varepsilon, \quad \mbox{for all $k \geq N$.}
$$ 
Note that there is $N_0 \in \mathbb{N}$ such that the closure of $\Lambda(g,N+N_0)$ is contained in the open set $\Lambda(g,N)$. 
Then, for every $h$ sufficiently close to $g$, it holds that 
$\Lambda(h, N+N_0) \subset \Lambda(g,N)$. Hence, 
$$
m( \Lambda_h (U)) \leq 
m(\Lambda(h,N+N_0)) \leq 
m(\Lambda(g,N)) \leq
m(\Lambda_g(U)) + \varepsilon.
$$
This means that 
$\varphi(h)\le \varphi(g)+\varepsilon$,
implying the lemma.
\end{proof}

By an standard result of topology, we get the following consequence.

\begin{cor}\label{gen}
Under the hypotheses and with the notation of Lemma \ref{>}, if there is a dense subset 
$\mathcal{W}$ of $\mathcal{U}$ such that $\varphi(g)=0$ for all $g \in \mathcal{W}$, 
then there is a residual subset $\mathcal{G}$ of $\mathcal{U}$ such that
$\varphi(g)=0$ for all $g \in \mathcal{G}$.
\end{cor}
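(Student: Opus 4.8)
The plan is the standard Baire-category upgrade of density to genericity for a nonnegative upper semicontinuous function; I do not expect a real obstacle here, the only point requiring care is invoking the correct topological facts (upper semicontinuity gives open sublevel sets, and $\operatorname{Diff}^1(M)$ is a Baire space).

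First I would record that $\varphi \ge 0$ on all of $\mathcal{U}$, since $\varphi(g) = m(\Lambda_g(U))$ is the Lebesgue measure of a measurable set. For each $k \in \mathbb{N}$, introduce the sublevel set
\[
\mathcal{G}_k \vcentcolon= \{\, g \in \mathcal{U} : \varphi(g) < 1/k \,\}.
\]
By the upper semicontinuity of $\varphi$ established in Lemma~\ref{>}, each $\mathcal{G}_k$ is open in $\mathcal{U}$. By hypothesis $\mathcal{W} \subset \mathcal{G}_k$, and $\mathcal{W}$ is dense in $\mathcal{U}$, so each $\mathcal{G}_k$ is an open and dense subset of $\mathcal{U}$.

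Then I would set $\mathcal{G} \vcentcolon= \bigcap_{k \in \mathbb{N}} \mathcal{G}_k$. Since $\mathcal{U}$ is (the interior of a neighborhood, hence) an open subset of the Baire space $\operatorname{Diff}^1(M)$, it is itself a Baire space, so $\mathcal{G}$ is a residual subset of $\mathcal{U}$. Finally, any $g \in \mathcal{G}$ satisfies $0 \le \varphi(g) < 1/k$ for every $k$, which forces $\varphi(g) = 0$; this is precisely the asserted conclusion. Alternatively, one could simply take $\mathcal{G}$ to be the residual set of continuity points of $\varphi$ already furnished by Lemma~\ref{>}: at such a point $g$, choosing $g_n \in \mathcal{W}$ with $g_n \to g$ yields $\varphi(g) = \lim_n \varphi(g_n) = 0$.
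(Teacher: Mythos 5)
Your proof is correct and is precisely the ``standard result of topology'' the paper invokes without detail: open sublevel sets from upper semicontinuity, density of $\mathcal{W}$ in each, and a Baire-category intersection (or, equivalently, evaluating at continuity points of $\varphi$). Nothing further is needed.
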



\begin{re} 
{\em{Lemma~\ref{>} and Corollary~\ref{gen}
 hold for attractors, as any attractor is an isolated set.}} 
\end{re}

\bigskip 

\section{Dynamically Minimal Laminations}\
\label{sminimality}

\subsection{$u$- and $s$-minimal sets}\

\medskip


\smallskip

For notational simplicity, given a strongly partially hyperbolic set $\Lambda$  we adopt the following notation. $$\mathcal{F}^{s}_{\Lambda}(x) = \mathcal{F}^{s}(x) \cap \Lambda  \quad \mbox{and} \quad \mathcal{F}^{u}_{\Lambda}(x) = \mathcal{F}^{u}(x) \cap \Lambda. $$

\begin{de}[dynamically minimal lamination] \label{def.minimal} \em{Let $\Lambda$ be a  partially hyperbolic set of a diffeomorphism $f$ with nontrivial stable bundle $E^s$. We say that the lamination $\mathcal{F}^{s}$ is \emph{dynamically minimal} (or $\Lambda$ is an $s$-minimal set) if there is $d \in \mathbb{N}$ such that,
for all $x \in \Lambda$, it holds that
$$
\bigcup_{i=1}^{d} \overline{\mathcal{F}_{\Lambda}^{s}(f^i(x))} = \Lambda.
$$

When $\Lambda=\Lambda_f(U)$ is an isolated set, $\Lambda$ is a \emph{robustly $s$-minimal set} if $\Lambda_g(U)$ is $s$-minimal for all $g$ in a neighborhood $\mathcal{U}$ of $f$. If $s$-minimality is verified only in a residual subset of $\mathcal{U}$, then $\Lambda_f(U)$ is called a \emph{generically $s$-minimal~set}.

The definition of $u$-minimality is analogous, considering the 
strong unstable lamination $\mathcal{F}^{u}$. }
\end{de}

The smallest natural number $d$ verifying this definition is called the \emph{minimal constant} of $\Lambda$. The reason we need such number $d$ of iterates to obtain the desired density property is that the attractor may not be a unique elementary piece. In fact, we prove in Section \ref{s.d} that the minimal constant $d$ is exactly the number of pieces in the spectral decomposition of $\Lambda$ (see Definition~\ref{d.sd}).  Moreover, when $\Lambda=M$, then $d=1$, so the definition of $u$- and $s$-minimality coincides with the definition of minimal foliation for partially hyperbolic diffeomorphisms. \ \

 The main result in this section is 
the following equivalence of Theorem A.

\begin{teo}\label{tint} 
Any $u$- or $s$-minimal set  
with non-empty interior is the whole manifold.
\end{teo}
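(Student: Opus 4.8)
The plan is to argue by contradiction: suppose $\Lambda$ is, say, $s$-minimal, has non-empty interior, but $\Lambda \neq M$. The key idea is that $s$-minimality forces the set to be saturated by the strong stable lamination once it has interior, and combining this with the invariance under $f$ and the fact that strong stable leaves have uniformly positive "size" inside $\Lambda$ (because they are dense in $\Lambda$ after finitely many iterates) will let me inflate the interior across the whole manifold. First I would fix a point $x_0$ in the interior of $\Lambda$ and a small ball $B \subset \operatorname{int}(\Lambda)$ around it. Since $B$ is open and contained in $\Lambda$, every strong stable leaf $\mathcal{F}^s(y)$ with $y \in B$ passes through an open set of $\Lambda$; I want to promote this to: $\mathcal{F}^s(y) \subset \Lambda$ for all $y \in B$, i.e. $B$ (hence $\operatorname{int}(\Lambda)$) is $\mathcal{F}^s$-saturated inside $\Lambda$.

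The mechanism for saturation is the following. Take any $y \in B$ and any point $z \in \mathcal{F}^s(y)$; I must show $z \in \Lambda$. Using $s$-minimality applied to $z$ (or rather to a backward iterate), there is some $i \in \{1,\dots,d\}$ with $\overline{\mathcal{F}^s_\Lambda(f^i(z'))}$ hitting the interior ball $B$ for an appropriate $z'$; more directly, I would run the leaf $\mathcal{F}^s(z)$ forward: under $f$ the strong stable leaf through $z$ contracts, and since $z$ and $y$ are on the same strong stable leaf, $d(f^n(z), f^n(y)) \to 0$. Because $\bigcup_{i=1}^d \overline{\mathcal{F}^s_\Lambda(f^i(y'))} = \Lambda$ for every $y'$, and $f^n(y) \in \Lambda$ stays in the compact set $\Lambda$, for large $n$ the point $f^n(z)$ is within any prescribed distance of $\Lambda$; but $\Lambda = \Lambda_f(U)$ is isolated, so a point whose entire forward and backward orbit stays in the isolating block $U$ lies in $\Lambda$ — and here I use that $f^n(z)$ stays near the orbit of $f^n(y)\in\Lambda\subset U$ for $n$ large while the earlier finitely many iterates can be controlled by taking $z$ close enough to $y$ on the leaf, i.e. by first proving the saturation statement for $\mathcal{F}^s_r(y)$ with $r$ small and then using that $f$ expands the leaf backward to exhaust all of $\mathcal{F}^s(y)$. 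Once I know $\operatorname{int}(\Lambda)$ is $\mathcal{F}^s$-saturated (at least locally), $s$-minimality gives that for each $y$ the iterates $f^i(y)$, $i=1,\dots,d$, have strong stable leaves dense in $\Lambda$, so $\Lambda = \bigcup_{i=1}^d \overline{\mathcal{F}^s_\Lambda(f^i(y))} \subset \overline{\operatorname{int}(\Lambda)}$ when $y$ is chosen in the interior — forcing $\Lambda$ to be the closure of its interior, which by invariance and connectedness arguments along $\mathcal{F}^s$, $\mathcal{F}^u$, and $\mathcal{F}^c$-type directions should spread to an open and closed subset of $M$, hence all of $M$.

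The main obstacle I expect is the passage from "an open piece of $\mathcal{F}^s(y)$ lies in $\Lambda$" to "all of $\mathcal{F}^s(y)$ lies in $\Lambda$," and then from $\mathcal{F}^s$-saturation in one transverse direction to filling a full neighborhood in $M$: strong stable leaves alone do not span the tangent space, so one needs to also move in the $E^c$ and $E^u$ directions. I would handle the $E^u$ direction by the dual trick — points in $\operatorname{int}(\Lambda)$ also have their strong unstable leaves locally in $\Lambda$ simply because the interior is open, and iterating $f^{-1}$ expands these; combined with local product structure of the laminations (leaves of $\mathcal{F}^s$ and $\mathcal{F}^u$ through interior points and their holonomies) one gets an open set in the $E^s \oplus E^u$ directions. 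The genuinely delicate point is the center direction $E^c$: since $E^c$ is neither contracted nor expanded, there is no dynamical leaf to ride. Here I would instead argue directly that $\operatorname{int}(\Lambda)$ is non-empty and $f$-invariant, so $\partial(\operatorname{int}\Lambda) \cap \Lambda$ is a compact invariant set; using Proposition~\ref{unif} (existence of hyperbolic periodic points of both indices $d^s$ and $d^s+1$, or rather density of periodic points in a transitive partially hyperbolic attractor) to find a periodic point $p$ in the interior whose full stable/unstable manifolds $W^s(p), W^u(p)$ must then be contained in $\overline{\operatorname{int}(\Lambda)} = \Lambda$, and finally invoking that such manifolds, together with iterates, accumulate on all of $M$ once the set has interior — this is exactly the classical saturation argument of \cite{B40} adapted to the partially hyperbolic laminations. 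In short, the heart of the proof is a saturation argument; the new subtlety relative to the hyperbolic case is covering the center direction, which I expect to resolve via the dynamically minimal hypothesis feeding back density of leaves into the interior.
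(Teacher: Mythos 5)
Your overall instinct --- saturate by strong stable leaves and then inflate the interior --- is the right one, but two of your concrete mechanisms fail. First, your route to leaf-saturation (for $z\in\mathcal{F}^s(y)$ with $y$ interior, use $d(f^n(z),f^n(y))\to 0$ plus isolation to conclude $z\in\Lambda$) does not work: membership in a maximal invariant set requires the \emph{entire} orbit, forward and backward, to stay in $U$, and taking $z$ close to $y$ on the leaf only controls finitely many backward iterates while $f^{-n}$ expands the leaf without bound. Moreover the theorem makes no isolation hypothesis at all. The correct mechanism is simpler: a strong stable disk $D$ sitting inside $\operatorname{int}(\Lambda)$ is a subset of $\Lambda$, so \emph{all} its iterates lie in $\Lambda$ by invariance of $\Lambda$; backward iteration grows its inner radius, so accumulation points of the backward orbit of its center carry whole leaves inside $\Lambda$, and $s$-minimality then propagates this to every point of $\Lambda$ (this is Lemma~\ref{ss}).

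Second, and more seriously, your endgame collapses exactly where you flag the difficulty. Showing $\Lambda=\overline{\operatorname{int}(\Lambda)}$ does not force $\Lambda=M$ (a closed ball is the closure of its interior), and your proposed fix --- periodic points of index $d^s$ whose stable and unstable manifolds jointly span, à la Brin --- requires genericity and transitivity hypotheses from Proposition~\ref{unif} that the theorem does not assume. No dynamical object in the center direction is actually needed. The argument that closes the proof is: take $z\in\partial\Lambda$; since $\Lambda$ is $\mathcal{F}^s$-saturated, $D=\mathcal{F}^s_r(z)\subset\Lambda$; by the uniform density of backward iterates of strong stable disks (Lemma~\ref{s}), some $f^{-N}(D)$ meets $\operatorname{int}(\Lambda)$, and invariance of the interior pulls this back to $D\cap\operatorname{int}(\Lambda)\neq\emptyset$. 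Thus the leaf of $z$ passes through a point $x$ of the interior, and the union of the whole leaves through a small full-dimensional ball $B\subset\operatorname{int}(\Lambda)$ around $x$ is, by continuity of the lamination, an open neighborhood of $\mathcal{F}^s(z)$ contained in $\Lambda$ --- the ball $B$ already covers the center and unstable directions, so no saturation in those directions is required. This makes $z$ an interior point, contradicting $z\in\partial\Lambda$, whence $\partial\Lambda=\emptyset$ and $\Lambda=M$. Your proposal is missing precisely the step that puts the leaf of the \emph{boundary} point into the interior.
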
 




 In the rest of this section, 
all the results are stated for $s$-minimal sets, though similar statements
(with similar proofs)
also hold in the $u$-minimal case. 

We start with some auxiliary lemmas and the following Remark, that  gives two well known properties of  the strong stable.
\\  
\begin{re} \label{r.bom}  \em{For every $r>0$ sufficiently small, it hold:\\
\begin{enumerate}[i)]
\item $\mathcal{F}^s(x) = \bigcup_{n\in \mathbb{N}} f^{-n}(\mathcal{F}_r^s( f^n(x)))$\\

\item There is $N \in \mathbb{N}$ such that $A_n(x) =  f^{-n.N}(\mathcal{F}_r^s( f^{n.N}(x)))$ yield a nested sequence (that is,  $A_n(x) \subset A_{n+1}(x)$ for every $n \in \mathbb{N}$).\\
\end{enumerate} }
\end{re}

Given a set $K \subset M$, we denote by $B_{\varepsilon}(K)$ the $\varepsilon$-neighborhood of $K$ relative to some fixed Riemannian metric on $M$. 


\begin{lema} 
\label{s} 
Let $\Lambda$ be an $s$-minimal set of a diffeomorphism $f$ and $d$ be its minimal constant. Given any $\varepsilon >0$ and $r > 0$ sufficiently small, 
 there is a constant $N=N(\varepsilon, r) \in \mathbb{N}$ such that
$$
\Lambda \subset
B_\varepsilon \Big(
\bigcup_{i=1}^d f^{-k.N +i}(\mathcal{F}^{s}_r(x)) \Big)  \quad \mbox{for all $x \in \Lambda$ and $k \in \mathbb{N}$}.
$$ 

\end{lema}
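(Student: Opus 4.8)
The plan is to deduce Lemma~\ref{s} from the definition of $s$-minimality together with a compactness argument, using Remark~\ref{r.bom} to control the sizes of the stable discs. First I would unwind what the minimal constant $d$ gives us: for every $x \in \Lambda$ we have $\bigcup_{i=1}^d \overline{\mathcal{F}^s_\Lambda(f^i(x))} = \Lambda$. The leaves $\mathcal{F}^s(f^i(x))$ are increasing unions of iterated local discs by Remark~\ref{r.bom}(i)--(ii): writing $A_n(f^i(x)) = f^{-nN_0}(\mathcal{F}^s_r(f^{nN_0 + i}(x)))$ for the fixed $N_0$ from Remark~\ref{r.bom}(ii), these are nested and exhaust $\mathcal{F}^s(f^i(x))$. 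Hence $\bigcup_{i=1}^d \bigcup_n \overline{A_n(f^i(x)) \cap \Lambda}$ is dense in $\Lambda$; since $\Lambda$ is compact, a finite subcover argument shows that for each $x$ there is $n(x) \in \mathbb{N}$ with $\Lambda \subset B_\varepsilon\big(\bigcup_{i=1}^d A_{n(x)}(f^i(x))\big)$. The issue is that $n(x)$ a priori depends on $x$; the heart of the lemma is to make it uniform, and to replace the fixed ``$\varepsilon/2$''-type estimate by one valid along the whole forward orbit $f^{kN}(x)$ simultaneously.

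Next I would upgrade the pointwise statement to a uniform one by another compactness argument, this time on $\Lambda$ itself. The map $x \mapsto \mathcal{F}^s_r(x)$ varies continuously (in the appropriate $C^1$ sense, as recalled in section~3 of \cite{p1}), so the set $\{y : \Lambda \subset B_\varepsilon(\bigcup_{i=1}^d f^{-nN_0}(\mathcal{F}^s_r(f^{nN_0+i}(y))))\}$ contains, for $n = n(x)$, a neighborhood of $x$: enlarging $r$ slightly if necessary (the statement already allows us to shrink $r$, and monotonicity in $r$ is on our side) and using that $A_{n(x)}(f^i(x))$ already $\varepsilon/2$-covers $\Lambda$, any $y$ close to $x$ will have $A_{n(x)}(f^i(y))$ Hausdorff-close to $A_{n(x)}(f^i(x))$, hence still $\varepsilon$-covering. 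Covering $\Lambda$ by finitely many such neighborhoods and taking $N := N_0 \cdot \max_j n(x_j)$ — using the nestedness from Remark~\ref{r.bom}(ii), which ensures that passing from $n(x_j)$ to the larger common value only enlarges the discs — yields a single $N$ that works for every $x \in \Lambda$.

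Finally I would handle the parameter $k$. Fix $x \in \Lambda$; apply the uniform statement just obtained to the point $f^{kN}(x) \in \Lambda$ in place of $x$. This gives $\Lambda \subset B_\varepsilon\big(\bigcup_{i=1}^d f^{-N}(\mathcal{F}^s_r(f^{N+i}(f^{kN}(x))))\big)$, and one more application with $f^{kN}(x)$ replaced by $f^{(k-1)N}(x)$, pushed forward, lets one iterate: the nested structure of the $A_n$'s means that $f^{-kN}(\mathcal{F}^s_r(f^{kN+i}(x))) \supset f^{-N}(\mathcal{F}^s_r(f^{N+i}(x')))$ for suitable intermediate $x'$, so running the argument down the orbit $x, f^N(x), \dots, f^{kN}(x)$ and using invariance of $\Lambda$ produces the claimed inclusion $\Lambda \subset B_\varepsilon\big(\bigcup_{i=1}^d f^{-kN+i}(\mathcal{F}^s_r(x))\big)$ for all $k$. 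The main obstacle is really the bookkeeping in this last step: one must be careful that the single constant $N$ chosen in step two is genuinely independent of the base point so that it can be reused along the orbit, and that the ``$B_\varepsilon$'' slack is not degraded when composing the covers at successive scales — this is where the nestedness in Remark~\ref{r.bom}(ii) and the uniform continuity of the strong stable lamination on the compact set $\Lambda$ are essential.
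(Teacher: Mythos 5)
Your overall strategy is the same as the paper's: use $s$-minimality plus the exhaustion of each leaf by the nested backward-iterated discs of Remark~\ref{r.bom} to get, for each base point, a finite time after which the discs are $\varepsilon$-dense; uniformize that time over $\Lambda$ by continuity of the lamination and compactness; and use nestedness to pass to a common multiple (your $N_0\cdot\max_j n(x_j)$ plays the role of the paper's $\operatorname{LCM}(N_1,\dots,N_m)$, and works for the same reason) and then to all multiples $kN$. Steps one and two are essentially the paper's proof.

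The last step, however, does not compute as written, because you are tracking the wrong family of discs. The sets in the statement are $f^{-kN+i}(\mathcal{F}^s_r(x))=f^i\big(f^{-kN}(\mathcal{F}^s_r(f^{kN}(y)))\big)$ with $y=f^{-kN}(x)$: a single disc centered at $x$, pulled back $kN$ times and then pushed forward $i$ times. You instead apply the uniform statement at the \emph{forward} iterate $f^{kN}(x)$ and manipulate sets of the form $f^{-kN}(\mathcal{F}^s_r(f^{kN+i}(x)))$, which are discs centered at the $d$ distinct points $f^{kN+i}(x)$ and which accumulate near $f^i(x)$ rather than near $f^{i-kN}(x)$; no amount of iterating ``down the orbit $x, f^N(x),\dots,f^{kN}(x)$'' turns these into the sets $f^{-kN+i}(\mathcal{F}^s_r(x))$. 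The fix is a single substitution, which is exactly how the paper concludes: phrase the uniform statement of your step two in the form $\Lambda\subset B_\varepsilon\big(\bigcup_{i=1}^d f^{i}(f^{-kN}(\mathcal{F}^s_r(f^{kN}(y))))\big)$ for all $y\in\Lambda$ and all $k$ (discs centered at the single point $f^{kN}(y)$, then pushed by $f^i$ — note this is $f^i(A_k(y))$, not $A_k(f^i(y))$, a distinction your write-up blurs), and then set $y=f^{-kN}(x)$. With that one change the argument closes; no new idea is needed.
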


\begin{proof} 
Fix $\varepsilon>0$ and $r> 0$. From $s$-minimality and Remark \ref{r.bom}, given any $y \in \Lambda$, there is $N_y \in \mathbb{N}$ such that 
$$
\Lambda \subset B_\varepsilon \Big(\bigcup_{i=1}^{d} f^{i} (f^{-N_y}(\mathcal{F}^{s}_r (f^{N_y}(y))))\Big).
$$
By the continuity of the foliation $\mathcal{F}^{s}$, there is a neighborhood $V(y)$ of $y$ such that the previous inclusion holds for all $z \in V(y) \cap \Lambda$, with $N_z=N_y$. Consider the covering $\{V(y)\}_{y \in \Lambda}$ of $\Lambda$. Since $\Lambda$ is a compact set, we may extract a finite subcovering $\{V(y_i)\}_{i=1}^m$ and constants $N_{y_i}$ such that, if 
$y\in \Lambda \cap V(y_j)$ for some $j \in \{1,\dots,m\}$, then
$$
\Lambda \subset  B_\varepsilon \Big(\bigcup_{i=1}^{d} f^{i} (f^{-N_j}(\mathcal{F}^{s}_r (f^{N_j}(y)))) \Big). 
$$

Let $N = \operatorname{LCM}(N_1,N_2, \cdots , N_m)$ be the lest commom multiple of these numbers. By item $ii)$ of Remark \ref {r.bom}, we can replace $N_j$ by any natural number $k.N$, with $k\in \mathbb{N}$, so we have 
$$\Lambda \subset B_\varepsilon \Big(\bigcup_{i=1}^{d} f^{i} (f^{-k.N}(\mathcal{F}^{s}_r (f^{k.N}(y)))) \Big), 
\quad  \mbox{for every $y \in \Lambda$ and $k\in \mathbb{N}$}. $$

Given $x \in \Lambda$ and $k \in \mathbb{N}$ we set $y= f^{-k.N}(x)$ in the above inclusion, so we obtain the  lemma. 
\end{proof}

\begin{lema} 
\label{ss} 
Let $\Lambda$ be an
 $s$-minimal set of a diffeomorphism $f$. If $\Lambda$ contains some strong stable disk, then $\Lambda$ contains the strong stable leaf of every point~in~$\Lambda$.  
\end{lema}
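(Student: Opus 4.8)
The plan is to show that if $\Lambda$ contains a single strong stable disk $D = \mathcal{F}^s_\rho(z)$ for some $z \in \Lambda$ and some $\rho > 0$, then in fact $\mathcal{F}^s(y) \subset \Lambda$ for every $y \in \Lambda$. The strategy is to combine the local density provided by Lemma~\ref{s} with the invariance and saturation properties of $\Lambda$. First I would observe that, since $\Lambda$ is $f$-invariant and $\Lambda$ contains $D$, it contains the forward iterates $f^n(D)$; by the contraction of $f$ along $E^s$ and item~$i)$ of Remark~\ref{r.bom}, backward iterates expand the disk, so I can instead work with $f^{-n}(D)$, which contains larger and larger strong stable disks around $f^{-n}(z)$, and all of these lie in $\Lambda$. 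Hence $\Lambda$ contains strong stable disks of arbitrarily large intrinsic radius centered along the backward orbit of $z$.

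Next I would use Lemma~\ref{s} applied at the point $x = f^{-kN}(z)$ (for suitable $k$): it gives that $\Lambda$ is contained in the $\varepsilon$-neighborhood of $\bigcup_{i=1}^d f^{-kN+i}(\mathcal{F}^s_r(x))$, i.e.\ of a finite union of iterated small strong stable plaques sitting over the orbit of $z$. The key point is that since $z \in \Lambda$ and $\Lambda$ contains the strong stable disk through $z$, the plaque $\mathcal{F}^s_r(f^j(z))$ is contained in $\Lambda$ (its preimage/image under the appropriate power lands in $\Lambda$). So each of those approximating plaques is genuinely inside $\Lambda$, not merely $\varepsilon$-close, and therefore $\Lambda$ is $\varepsilon$-dense in itself using pieces of strong stable leaves lying in $\Lambda$. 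Letting $\varepsilon \to 0$ and using that $\Lambda$ is closed, I get that for every $y \in \Lambda$ there is a point of some strong stable leaf contained in $\Lambda$ arbitrarily close to $y$; combined with the continuity of the lamination $\mathcal{F}^s$ (nearby points have nearby local leaves) and the fact that $\Lambda$ is closed and saturated-in-the-limit, this forces $\mathcal{F}^s_{\mathrm{loc}}(y) \subset \Lambda$ for every $y$, and then iterating backward and using Remark~\ref{r.bom}~$i)$ gives the full leaf $\mathcal{F}^s(y) \subset \Lambda$.

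More carefully, the mechanism I expect to need is: the set $S := \{ y \in \Lambda : \mathcal{F}^s(y) \subset \Lambda \}$ is nonempty (it contains $z$), it is $f$-invariant, and by continuity of the lamination it is closed in $\Lambda$; the content of Lemma~\ref{s} together with the existence of the disk is precisely that $S$ is also $\varepsilon$-dense in $\Lambda$ for every $\varepsilon$, hence dense, hence (being closed) equal to $\Lambda$. So the proof reduces to checking these three properties of $S$ plus the $\varepsilon$-density, with the $\varepsilon$-density coming directly from Lemma~\ref{s} once one knows the approximating plaques lie in $\Lambda$.

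\textbf{Main obstacle.} The delicate step is passing from ``$\Lambda$ is $\varepsilon$-approximated by strong stable plaques that lie in $\Lambda$'' to ``$\mathcal{F}^s_{\mathrm{loc}}(y) \subset \Lambda$ for every $y \in \Lambda$.'' Being $\varepsilon$-close to a plaque inside $\Lambda$ does not immediately put the nearby point's own local leaf inside $\Lambda$; one must use the continuity (and local product structure / holonomy) of the lamination $\mathcal{F}^s$ to transfer the plaque, and then use that $\Lambda$ is closed to absorb the limit. Controlling this uniformly — so that the size $r$ of the plaques does not shrink to zero in the limit — is where the hypothesis ``$r$ sufficiently small, uniform'' in Lemma~\ref{s} and the uniform continuity of $\mathcal{F}^s$ on the compact set $\Lambda$ must be invoked carefully.
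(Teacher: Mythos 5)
Your overall route --- use Lemma~\ref{s} with $x=z$ to approximate every point of $\Lambda$ by backward iterates of a plaque contained in the given disk $D=\mathcal{F}^s_\rho(z)$, deduce that every point of $\Lambda$ carries a uniform-size local plaque inside $\Lambda$, and then recover full leaves via item i) of Remark~\ref{r.bom} --- is viable and genuinely different from the paper's, but as written it has two gaps, one of which is the actual crux. First, the assertion in your ``more careful'' packaging that $S=\{y\in\Lambda:\mathcal{F}^s(y)\subset\Lambda\}$ contains $z$ is unjustified (in effect circular): knowing $D\subset\Lambda$ does not give the whole leaf of $z$, because by item i) of Remark~\ref{r.bom} the full leaf is exhausted by pulling back uniform-size plaques around the \emph{forward} iterates $f^n(z)$, and there all you control are the shrinking disks $f^n(D)$, whose pullbacks just return $D$. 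For the same reason Lemma~\ref{s} does not directly give density of $S$ (points of the approximating set are not known to have their \emph{entire} leaves in $\Lambda$), so your closed--invariant--dense scheme must be run for $S_\delta=\{v:\mathcal{F}^s_\delta(v)\subset\Lambda\}$ rather than for $S$.

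Second --- and this is the step you flag as the ``main obstacle'' but do not resolve --- you need every point $v$ of the approximating set $f^{-kN+i}(\mathcal{F}^s_r(z))$ to carry a plaque $\mathcal{F}^s_\delta(v)\subset\Lambda$ with $\delta$ independent of $v$, $k$, $i$; a point $v$ near the boundary of the (large) iterated disk is not covered by your remark that the disk itself lies in $\Lambda$. This can be repaired: take $r<\rho$ and $\delta$ so small that forward images of $\delta$-plaques stay within stable distance $\rho-r$ of their centers; then $f^{kN-i}(v)\in\mathcal{F}^s_r(z)$ forces $f^{kN-i}(\mathcal{F}^s_\delta(v))\subset\mathcal{F}^s_\rho(z)=D\subset\Lambda$, hence $\mathcal{F}^s_\delta(v)\subset\Lambda$ by invariance. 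With that, continuity of the lamination plus closedness of $\Lambda$ gives $S_\delta=\Lambda$, and applying item i) of Remark~\ref{r.bom} at each $f^n(w)$ yields $\mathcal{F}^s(w)\subset\Lambda$ for every $w$. The paper sidesteps the uniformity issue differently: it passes to an accumulation point $y$ of the \emph{backward} orbit of the disk's center, notes that every forward iterate $f^m(y)$ is again such an accumulation point and therefore carries a $\delta$-plaque in $\Lambda$ with uniform $\delta$, concludes $\mathcal{F}^s(y)\subset\Lambda$ for that single $y$, and only then uses $s$-minimality to spread entire leaves densely and absorb them by closedness. Either mechanism works, but one of them must be carried out; as submitted, your argument stops exactly at the point where the work is.
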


\begin{proof}
Let $r>0$ and $x_0 \in \Lambda$ be such that the strong stable disk $D =\mathcal{F}^{s}_r(x_0)$ is contained in $\Lambda$, and let  $y\in \Lambda$ be an accumulation point of the backward orbit of $x_0$.



Fix $\delta> 0$ sufficiently small so that, by the partial hyperbolicity on $\Lambda$, there is $m_0\in \mathbb{N}$ such that for every stable disk $\mathcal{S}$ of length $\delta$ and $m\geq m_0$,  the image $f^m(\mathcal{S})$ is contained inside a stable disk of radius $r$. Hence, there is an increasing sequence $\{n_i\}_{n \in \mathbb{N}} \subset \mathbb{N}$, with $n_i \geq m_0$, such that $\lim_{i \to \infty}f^{-n_i}(x_0) = y$ and, for every $i \in \mathbb{N}$, the disk $f^{-n_i}(D)$ has inner radius bigger than $\delta$. By the continuity of the lamination, we obtain that $\mathcal{F}^s_{\delta}(y) \subset \Lambda$. For every $m\in \mathbb{N}$, the point  $f^{-m}(y)$ is also an accumulation point of the backward orbit of $x_0$, so the same argument leads to $\mathcal{F}^s_{\delta}( f^m(y)) \subset \Lambda$. Then we conclude that $f^{-m}(\mathcal{F}^s_{\delta}( f^m(y))) \subset \Lambda$ for every $m \in \mathbb{N}$, which implies that $\mathcal{F}(y) \subset \Lambda$ (see Remark \ref{r.bom}). Now $s$-minimality gives that $\bigcup_{i=1}^d f^{i}(\mathcal{F}^{s}(y))$ is a dense subset of~$\Lambda$.

 At this point, we concluded that every $z \in \Lambda$ is accumulated by an entire strong 
stable leaf $f^i(\mathcal{F}^s(y)) \subset \Lambda$, for some $i \in \{1,\cdots,d\}$. Since the strong stable lamination is continuous and $\Lambda$ is closed, we get that
$\mathcal{F}^{s}(z) \subset \Lambda$, ending the proof of this Lemma. \end{proof}


We are now ready to prove of Theorem~\ref{tint}

\begin{proof}[Proof of Theorem~\ref{tint}]  
Observe that the interior of $\Lambda$, denoted by $\operatorname{int}(\Lambda)$, is an invariant subset of $\Lambda$. 
Moreover, if $\Lambda$ has non-empty interior, then it contains some strong stable disk. By Lemma \ref{ss}, the set $\Lambda$ contains the strong stable leaf of every point in $\Lambda$. 

\smallskip

Suppose that the boundary $\partial \Lambda$ of $\Lambda$ is non-empty. 
Let $z \in \partial \Lambda$ and consider the disk $D =\mathcal{F}^{s}_r(z) \subset \Lambda$. By Lemma \ref{s}, there is 
$N \in \mathbb{N}$ such that $f^{-N}(D)$ intersects $\operatorname{int}(\Lambda)$. 
The $f$-invariance of $\operatorname{int}(\Lambda)$ implies that $D \cap \operatorname{int}(\Lambda) \not= \emptyset$. 
Now, choose some point $x$ in this intersection and an 
open neighborhood $B$ of $x$ with $B \subset 
\operatorname{int}(\Lambda)$. For each point $y\in B$ we consider
its entire strong stable leave $\mathcal{F}^{s}(y)$, that is
contained in $\Lambda$ (recall Lemma~\ref{ss}). 
By the continuity of the strong stable foliation, the set $V=\bigcup_{y\in B} \mathcal{F}^{s}(y)\subset \Lambda$
is a neighborhood of $\mathcal{F}^{s}(x)=\mathcal{F}^{s}(z)$. Thus $V$ is a neighborhood of $z$ that is contained in $\Lambda$,
contradicting the fact that $z\in \partial \Lambda$. Therefore $\partial \Lambda = \emptyset$, and consequently $\Lambda=M$. \end{proof}


We end this section by providing two technical  results that will be necessary in Section  \ref{s.d}. 

First, let us recall that, by item (4) of Proposition \ref{unif}, the partially hyperbolic splitting of a generic partially hyperbolic homoclinic class $\Lambda$  extends to a neighborhood $U$ of $\Lambda$ in an invariant way. In addition, Lemma 5.3 and Remark 5.5 in \cite{p1} assure that the strong stable leave of any point in $\Lambda$ that approximate a hyperbolic periodic point in $\Lambda$ of index $d^s$ (the dimention of the stable bundle)  must transversally  intersect the unstable manifold of this point.  This is an important fact we are assuming during the proof of the following Lemma.

\begin{lema} \label{bom} Let $f \in \mathcal{R}$ and $\Lambda_f(U) = H(p,f)$ be an isolated $s$-minimal  partially hyperbolic homoclinic class of a hyperbolic periodic point $p$ of index $d^s$. Then, the unstable manifold of $p$ meets transversely any strong stable disk centered at a point in $\Lambda_f(U)$.
\end{lema}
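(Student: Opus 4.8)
The plan is to combine the $s$-minimality of $\Lambda_f(U)$ with the three ingredients recalled just before the statement: (a) the partially hyperbolic splitting of $\Lambda=H(p,f)$ extends invariantly to a neighborhood $W$ of $\Lambda$ (item (4) of Proposition~\ref{unif}); (b) the strong stable leaf of any point of $\Lambda$ that accumulates on a hyperbolic periodic point of index $d^s$ must transversally intersect the unstable manifold of that point (Lemma 5.3 and Remark 5.5 of \cite{p1}); and (c) the density property coming from $s$-minimality, namely that $\bigcup_{i=1}^d \overline{\mathcal{F}^s_\Lambda(f^i(x))}=\Lambda$ for every $x\in\Lambda$. First I would fix an arbitrary strong stable disk $D=\mathcal{F}^s_r(x)$ centered at a point $x\in\Lambda_f(U)$ and the hyperbolic periodic point $p$, which lies in $\Lambda$ and has $\operatorname{index}(p)=d^s$.

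The key step is to produce, inside the backward (or forward) orbit of $D$, a strong stable disk that comes arbitrarily close to a point of the orbit of $p$, so that ingredient (b) applies. Here is how I would arrange this. Since $p$ is periodic in $\Lambda$, its backward orbit is just its (finite) orbit, and in particular $p\in\Lambda$ is a point whose strong stable leaf $\mathcal{F}^s_\Lambda(p)$, by $s$-minimality applied appropriately, has an iterate whose closure is dense in $\Lambda$ — but the cleaner route is the reverse: apply $s$-minimality to $p^{-i}$ (a point on the orbit of $p$) to get that some $\overline{\mathcal{F}^s_\Lambda(f^j(p))}=$ part of a union covering $\Lambda$, hence contains $x$; then iterate forward. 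Concretely, by Definition~\ref{def.minimal} there is $j\in\{1,\dots,d\}$ with $x\in\overline{\mathcal{F}^s_\Lambda(f^j(p))}$, so there is a sequence of points $z_k\in\mathcal{F}^s(f^j(p))\cap\Lambda$ with $z_k\to x$. Pulling this configuration back by a large iterate $f^{-n}$: since points on the same strong stable leaf are forward asymptotic, $f^{-n}(z_k)$ and $f^{-n}(f^j(p))=f^{j-n}(p)$ need not be close, so instead I would run the argument forward. Under $f^n$ the disk $f^{-n}(D)$ has small inner radius, while $D$ itself is a genuine $r$-disk; the right mechanism, exactly as in the proof of Lemma~\ref{ss}, is to take $y\in\Lambda$ an accumulation point of the backward orbit of the center $x$ so that large preimages $f^{-n_i}(D)$ are disks of definite size around $y$, and to choose the accumulation point to be on the orbit of $p$. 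To guarantee we can choose $y$ on $\mathcal{O}(p)$: by $s$-minimality some forward iterate of $\mathcal{F}^s_\Lambda(p)$ is dense, in particular accumulates on $x$; equivalently (using that $\mathcal{F}^s$ leaves are permuted by $f$) the backward orbit of $x$ accumulates on a point of $\mathcal{F}^s(\mathcal{O}(p))$, hence — since $\mathcal{F}^s_\delta(\mathcal{O}(p))$ is contained in any neighborhood we like of $\mathcal{O}(p)$ after further backward iteration, as strong stable disks contract — the backward orbit of $x$ enters every neighborhood of $\mathcal{O}(p)$. Thus for every $\delta>0$ there are infinitely many $n$ with $f^{-n}(x)\in B_\delta(\mathcal{O}(p))$, and by the size-control from partial hyperbolicity (choosing these $n$ large) $f^{-n}(D)$ contains a strong stable disk of inner radius $\ge\delta_0$ around $f^{-n}(x)$.

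Having placed a strong stable disk $\mathcal{S}=f^{-n}(D)\subset f^{-n}(\Lambda)\subset W$ of definite size near a point $q_0\in\mathcal{O}(p)$ of index $d^s$, ingredient (b) gives that $\mathcal{S}$ meets $W^u(q_0)$ transversally, hence $\mathcal{S}$ meets $W^u(\mathcal{O}(p))=W^u(p)$ transversally. Pushing this transverse intersection forward by $f^n$ and using that $W^u(p)$ is $f$-invariant and that $f^n$ is a diffeomorphism preserving transversality, we conclude that $D=f^n(\mathcal{S})$ meets $W^u(p)$ transversally, which is exactly the assertion. The main obstacle I anticipate is the bookkeeping in the previous paragraph: making rigorous the claim that the backward orbit of the center $x$ accumulates on the orbit of $p$ (rather than merely on $\Lambda$), and controlling the inner radius of $f^{-n}(D)$ uniformly while doing so; this is where one must invoke $s$-minimality applied to $p$ together with the contraction of strong stable disks under backward iteration, in the style of Lemma~\ref{ss}, and also where the hypothesis $\operatorname{index}(p)=d^s$ is essential so that (b) is available. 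Everything else — extending the splitting, preservation of transversality under $f^{\pm 1}$, $f$-invariance of $W^u(p)$ — is routine.
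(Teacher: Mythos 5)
Your overall architecture (find a definite-size strong stable disk in the backward orbit of $D$ near the orbit of $p$, apply the transversality fact for index-$d^s$ periodic points, push the intersection forward) is the same as the paper's. But the step on which everything hinges — producing that disk near $\mathcal{O}(p)$ — is justified by a claim that is false in general. You assert that, because $x\in\overline{\mathcal{F}^s_\Lambda(f^j(p))}$ for some $j$, ``equivalently'' the backward orbit of the center $x$ accumulates on $\mathcal{F}^s(\mathcal{O}(p))$ and hence enters every neighborhood of $\mathcal{O}(p)$. These statements are not equivalent: ``$x$ is accumulated by points of the leaf of $f^j(p)$'' says nothing about where the backward orbit of the point $x$ goes. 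Indeed, if $x$ is itself a periodic point with orbit disjoint from $\mathcal{O}(p)$ (such points are dense in the homoclinic class), then $\overline{\mathcal{O}^-_f(x)}=\mathcal{O}_f(x)$ is finite and never comes near $\mathcal{O}(p)$, so no accumulation point $y$ of $\mathcal{O}^-_f(x)$ can be chosen on $\mathcal{O}(p)$. There is also a direction error in the supporting remark: strong stable disks contract under \emph{forward} iteration and expand under backward iteration, so ``$\mathcal{F}^s_\delta(\mathcal{O}(p))$ is contained in any neighborhood of $\mathcal{O}(p)$ after further backward iteration'' is the wrong way around.

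The object whose backward orbit does accumulate on $p$ is the \emph{disk} $D=\mathcal{F}^s_r(x)$, not its center: this is exactly Lemma~\ref{s}, which gives $N$ with $\Lambda\subset B_\varepsilon\bigl(\bigcup_{i=1}^d f^{-kN+i}(\mathcal{F}^s_r(x))\bigr)$, so some backward iterate of $D$ contains a point $y$ that is $\varepsilon/2$-close to $p$ (here $y$ is typically far out along the leaf from the backward iterate of $x$). The paper's proof then takes the $\delta$-disk $\mathcal{F}^s_\delta(y)$, which meets $W^u_\varepsilon(\mathcal{O}_f(p))$ transversely by the cited Lemma 5.3/Remark 5.5 of \cite{p1}, and pushes it forward: since forward iteration contracts stable leaves, $f^{N}(\mathcal{F}^s_\delta(y))\subset\mathcal{F}^s_{2r}(x)$ for $N$ large, so $\mathcal{F}^s_{2r}(x)$ meets $W^u(\mathcal{O}_f(p))$ transversely, and $r$ was arbitrary. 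Replacing your accumulation-point argument by an appeal to Lemma~\ref{s} repairs the proof; as written, the gap is genuine.
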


\begin{proof}

Fix $x\in \Lambda_f(U)$, $r>0$ and $\delta>0$. Given $\varepsilon>0$, Lemma~\ref{s} gives $N \in\mathbb{N}$ such that $f^{-N}(\mathcal{F}_{r}^s(x))$ contain a point $y$ that is $\varepsilon/2$-close to $p$. By taking $\varepsilon$ sufficiently small, the disk $\mathcal{F}_{\delta}^s(y)$ intersect transversely $W^u_{\varepsilon}(\mathcal{O}_f(p))$. Moreover, by item $ii)$ of Remark \ref{r.bom},  $N$ can be chosen big enough so that, as $f$ contracts the stable leaves, $f^{N}(\mathcal{F}_{\delta}^s(y)) \subset \mathcal{F}_{2r}^s(x)$. This shows that $\mathcal{F}_{2r}^s(x)$ intersect transversely $W^u(\mathcal{O}_f(p))$.
By the arbitrary choice of $x \in \Lambda_f(U)$ and $r>0$, the conclusion follows. \end{proof}

%
%

\begin{lema}  \label{l.restriction0} Let $f \in \mathcal{R}$ and $\Lambda =  H(p,f)$ be an isolated $s$-minimal partially hyperbolic set of some hyperbolic periodic point $p$ of index $d^s$. Then, for every $x, y \in \Lambda$ satisfying  $\mathcal{F}^s(x) \subset \overline{\mathcal{F}^s(y)}$ it holds that $\mathcal{F}_{\Lambda}^s(x) \subset \overline{\mathcal{F}_{\Lambda}^s(y)}$.
\end{lema}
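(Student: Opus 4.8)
\textbf{Proof proposal for Lemma~\ref{l.restriction0}.}

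The plan is to promote the leafwise inclusion $\mathcal{F}^s(x)\subset\overline{\mathcal{F}^s(y)}$ to the relative inclusion $\mathcal{F}^s_\Lambda(x)\subset\overline{\mathcal{F}^s_\Lambda(y)}$ by exploiting that $\Lambda$ itself is saturated by strong stable leaves near any point where we can run a density argument. More precisely, suppose $z\in\mathcal{F}^s_\Lambda(x)$; we want to approximate $z$ by points of $\mathcal{F}^s(y)\cap\Lambda$. Since $\mathcal{F}^s(x)\subset\overline{\mathcal{F}^s(y)}$, there is a sequence $y_n\in\mathcal{F}^s(y)$ with $y_n\to z$; the only issue is that a priori $y_n\notin\Lambda$. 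The key observation is that, because $p$ has index $d^s$, Lemma~\ref{bom} tells us that $W^u(\mathcal{O}_f(p))$ meets every strong stable disk centered at a point of $\Lambda$ transversely, and homoclinic points of $p$ are dense in $\Lambda=H(p,f)$. I would first establish that $\Lambda$ contains a strong stable disk — namely, transverse intersections of $W^s(\mathcal{O}_f(p))$ with $W^u(\mathcal{O}_f(p))$ accumulate on all of $\Lambda$, and near such intersection points the strong stable leaf is contained in (the closure of) $W^s(\mathcal{O}_f(p))\cap\Lambda$ by the usual $\lambda$-lemma/inclination argument — and hence by Lemma~\ref{ss} that $\Lambda$ contains the entire strong stable leaf of every one of its points. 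That is, $\mathcal{F}^s_\Lambda(w)=\mathcal{F}^s(w)$ for all $w\in\Lambda$, after which the conclusion is immediate.

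Wait — that would make the statement trivial, so the intended hypothesis must be weaker: $\Lambda$ need not already contain a full strong stable disk, so $\mathcal{F}^s_\Lambda$ can be a proper (Cantor-like) subset of $\mathcal{F}^s$ inside each leaf. The honest approach, then, is the following. Fix $z\in\mathcal{F}^s_\Lambda(x)$ and $\rho>0$; I must produce a point of $\mathcal{F}^s(y)\cap\Lambda$ within $\rho$ of $z$. Choose $r>0$ small and apply Lemma~\ref{s}: for the given $\varepsilon$ (to be taken $\ll\rho$) there is $N\in\mathbb{N}$ with $\Lambda\subset B_\varepsilon\big(\bigcup_{i=1}^d f^{-kN+i}(\mathcal{F}^s_r(z))\big)$ for all $k$; since $y\in\Lambda$, some forward–backward iterate of $y$ lands $\varepsilon$-close to $\mathcal{F}^s(z)$. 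Because $z\in\mathcal{F}^s(x)\subset\overline{\mathcal{F}^s(y)}$, the leaves $\mathcal{F}^s(z)$ and $\mathcal{F}^s(y)$ coincide (if $z\in\mathcal{F}^s(x)$ and $\mathcal{F}^s(x)\subset\overline{\mathcal{F}^s(y)}$, then either $\mathcal{F}^s(x)=\mathcal{F}^s(y)$ or the leaf through $x$ limits onto itself from within $\mathcal{F}^s(y)$); I would handle the case $\mathcal{F}^s(x)=\mathcal{F}^s(y)$ first, where the statement is vacuous, and then the genuinely new case where $x\notin\mathcal{F}^s(y)$. In that case the density of $\mathcal{F}^s(y)$ near $z$ combined with the fact that the backward orbit of $\mathcal{F}^s_r(z)$ (in $d$ blocks of iterates) $\varepsilon$-covers $\Lambda$ lets me find, for each point $w\in\mathcal{F}^s_\Lambda(x)$, a nearby point of $\Lambda$ on a plaque that is $C^1$-close to a plaque of $\mathcal{F}^s(y)$; pulling back by the contraction $f^{-n}$ (which expands nothing in the stable direction and hence improves the approximation) yields points of $\mathcal{F}^s(y)$ arbitrarily close to $w$ that lie in $B_\varepsilon(\Lambda)$, and then a limiting/holonomy argument using continuity of $\mathcal{F}^s$ and closedness of $\Lambda$ places actual points of $\mathcal{F}^s(y)\cap\Lambda$ within $\rho$ of $w$.

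The main obstacle, and the step I expect to require the most care, is precisely this last transfer: going from ``there are points of $\Lambda$ close to $\mathcal{F}^s(y)$'' to ``there are points of $\mathcal{F}^s(y)$ that are genuinely \emph{in} $\Lambda$.'' This is where Lemma~\ref{bom} (hence the index-$d^s$ hypothesis on $p$) is essential: a strong stable disk centered at a point of $\Lambda$ meets $W^u(\mathcal{O}_f(p))$ transversely, and a transverse intersection point of a strong stable disk with an unstable manifold of a periodic point in $H(p,f)$ is a homoclinic point, hence in $\Lambda$; iterating this under the stable contraction accumulates genuine points of $\Lambda$ along the plaque. Combining this with the leafwise density $\mathcal{F}^s(x)\subset\overline{\mathcal{F}^s(y)}$ and the holonomy between $\mathcal{F}^s(z)$-plaques and $\mathcal{F}^s(y)$-plaques (continuous since the lamination is continuous) should close the argument; the bookkeeping of the $d$ iterate-blocks from Lemma~\ref{s} and of the radii $r,\delta,\varepsilon,\rho$ is routine but must be done in the right order.
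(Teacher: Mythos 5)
Your overall strategy---use Lemma~\ref{bom} to produce transverse intersections of $W^u(\mathcal{O}_f(p))$ with small strong stable disks centered at points of $\mathcal{F}^s_\Lambda(x)$, transfer the intersection to $\mathcal{F}^s(y)$ via the leafwise density hypothesis and persistence of transversality, and then argue that the resulting point of $\mathcal{F}^s(y)$ actually lies in $\Lambda$---is the same as the paper's. But the step you yourself single out as the main obstacle is closed by a false claim: you assert that ``a transverse intersection point of a strong stable disk with an unstable manifold of a periodic point in $H(p,f)$ is a homoclinic point, hence in $\Lambda$.'' A homoclinic point of $p$ must lie in $W^s(\mathcal{O}_f(p))\cap W^u(\mathcal{O}_f(p))$, and a point $w\in\mathcal{F}^s_r(z)\cap W^u(\mathcal{O}_f(p))$ satisfies $d(f^n(w),f^n(z))\to 0$, so it belongs to $W^s(\mathcal{O}_f(p))$ only if $z$ itself does---which fails for a general $z\in\Lambda$ (take $z$ a periodic point on a different orbit). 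So this step, as written, does not produce points of $\Lambda$ on the leaf $\mathcal{F}^s(y)$, and the argument does not close.

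The missing idea is one further application of $s$-minimality, this time to the leaf of the periodic point itself. Because $\operatorname{index}(p)=d^s$, one has $\mathcal{F}^s(p)\subset W^s(p)$, so transverse intersections of the orbit of $\mathcal{F}^s(p)$ with $W^u(\mathcal{O}_f(p))$ \emph{are} genuine transverse homoclinic points and therefore lie in $H(p,f)=\Lambda$. By $s$-minimality (plus continuity of the lamination) the orbit of $\mathcal{F}^s(p)$ accumulates on the entire leaf $\mathcal{F}^s(y)$; hence near the transverse intersection point $\hat w\in\mathcal{F}^s(y)\cap W^u(\mathcal{O}_f(p))$ there is a sequence of such homoclinic points converging to $\hat w$, and closedness of $\Lambda$ gives $\hat w\in\mathcal{F}^s_\Lambda(y)$. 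Letting the radii go to zero then yields $z\in\overline{\mathcal{F}^s_\Lambda(y)}$. As a secondary point, your parenthetical assertion that $\mathcal{F}^s(x)\subset\overline{\mathcal{F}^s(y)}$ forces the leaves $\mathcal{F}^s(z)$ and $\mathcal{F}^s(y)$ to coincide is also incorrect---the hypothesis only says that $\mathcal{F}^s(y)$ accumulates on $\mathcal{F}^s(x)$---and the case split built on it becomes unnecessary once the homoclinic-point argument above is in place. The machinery of Lemma~\ref{s} and the $d$ iterate-blocks that you invoke is likewise not needed here.
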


\begin{proof} Let $z \in \mathcal{F}^s_{\Lambda}(x)$, $r>0$ and consider the disk $\mathcal{F}^s_r(z)$. By Lemma~\ref{bom}, $W^u(p)$ meets transversely $\mathcal{F}^s_r(z)$, say at the point $w$. Since     
$\mathcal{F}^s(x) \subset \overline{\mathcal{F}^s(y)}$, we also have an intersection point $\hat{w}$ of $\mathcal{F}^s(y)$ and $W^u(p)$ that can be choosen arbitrarily close to $w$. From $s$-minimality, the orbit of $\mathcal{F}^s(p)$ accumulates at $\mathcal{F}^s(y)$ and thus intersect transversely $W^u(p)$ in a sequence of points that accumulate to $\hat{w}$. This sequence of points consist of transverse homoclinic points of $p$, so $\hat{w} \in \Lambda$. As $r$ can be chosen arbitrarily small and $\hat{w}$ can be chosen arbitrarily close to $w$, we conclude that $z \in \overline{\mathcal{F}^s_{\Lambda}(y)}$. Since it holds for every $z \in  \mathcal{F}_{\Lambda}^s(x)$ we finally obtain that $\mathcal{F}_{\Lambda}^s(x) \subset \overline{\mathcal{F}_{\Lambda}^s(y)}$.\end{proof}

\bigskip

\subsection{$s$-minimal attractors}\ \label{sattractor}

\medskip

In what follows we study $s$-minimal attractors apart, with no  similar statements to the case of $u$-minimal attractors \footnote{ Recall that by taking $f^{-1}$, the attractor becomes a repellor. }.

The main result presented here is Theorem C. Before proving it, we need some intermediate results that also hold for $d^c \geq 1$.

%

In the next statements, the notation $\operatorname{Per}_{\sigma} (f_{|_{\Lambda}})$ stands for the set of hyperbolic periodic points in $\Lambda$ of index $\sigma$.

\begin{lema}\label{sss} Let $\Lambda =\Lambda_f(U)$ be a partially hyperbolic attractor that is $s$-minimal, contains some strong stable disk, and has a point $ p \in \operatorname{Per_{d
^s}(f_{|_{\Lambda}})}$. Then $\Lambda$ is the whole manifold.
\end{lema}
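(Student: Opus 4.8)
The plan is to combine Lemma~\ref{ss} with the attractor property to show that $\Lambda$ contains a full strong unstable leaf of some point, and then to saturate by both laminations. First I would invoke Lemma~\ref{ss}: since $\Lambda$ is $s$-minimal and contains a strong stable disk, it contains the entire strong stable leaf $\mathcal{F}^s(z)$ of every $z \in \Lambda$. In particular $\Lambda$ is saturated by $\mathcal{F}^s$. Next I would use the hyperbolic periodic point $p \in \operatorname{Per}_{d^s}(f_{|_\Lambda})$: because $\operatorname{index}(p) = d^s$, the unstable manifold $W^u(p)$ has dimension $d^c + d^u$ and is tangent to $E^c \oplus E^u$ at $p$; moreover, since $\Lambda$ is an attractor (hence $U$ is an attracting block with $f(\overline U) \subset U$), the whole unstable manifold $W^u(p)$, and in particular $W^u(\mathcal{O}_f(p))$, is contained in $\Lambda$. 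So $\Lambda$ contains a $(d^c+d^u)$-dimensional disk through each point of the orbit of $p$.

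The key step is then to propagate this. Using $s$-minimality in the form of Lemma~\ref{s} (or directly the density of the orbit of strong stable leaves), the backward orbit of $W^u(p)$ together with the strong stable saturation should fill a neighborhood: concretely, take any $x \in \Lambda$ and a small strong stable disk $D = \mathcal{F}^s_r(x) \subset \Lambda$; by Lemma~\ref{s} some negative iterate $f^{-kN+i}(D)$ comes $\varepsilon$-close to $p$, so (choosing $\varepsilon$ small and using that stable disks get long under backward iteration, as in the proof of Lemma~\ref{ss}) a long strong stable disk near $p$ lies in $\Lambda$ and, by continuity of the laminations, meets $W^u(p)$ transversally. Pulling back, $D$ itself — and hence the point $x$ — is accumulated by pieces of $\mathcal{F}^u$-leaves inside $\Lambda$. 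Since $x$ was arbitrary and $\Lambda$ is closed, $\Lambda$ contains the strong unstable leaf $\mathcal{F}^u(x)$ of every $x \in \Lambda$; thus $\Lambda$ is saturated by both $\mathcal{F}^s$ and $\mathcal{F}^u$. Near any point $x$ in (the nonempty) $\operatorname{int}(\Lambda)$ one has a $d^s$-disk times a $d^c$-interval times a $d^u$-disk of stable, center, unstable directions — but wait: I only have $\mathcal{F}^s$ and $\mathcal{F}^u$ saturation, covering $d^s + d^u$ directions, so I still need the center direction. Here I would use $W^u(p)$ again: it is $(d^c+d^u)$-dimensional and lies in $\Lambda$, so together with the $\mathcal{F}^s$-saturation I get, by the local product structure of the laminations and the invariant center direction inside $W^u(p)$, a genuine open set inside $\Lambda$; this shows $\operatorname{int}(\Lambda) \neq \emptyset$ robustly and that $\operatorname{int}(\Lambda)$ is saturated by $\mathcal{F}^s$, $\mathcal{F}^u$ and locally by $W^u(p)$.

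Finally I would run the boundary argument from the proof of Theorem~\ref{tint}: $\operatorname{int}(\Lambda)$ is $f$-invariant, and if $\partial\Lambda \neq \emptyset$, pick $z \in \partial\Lambda$; the strong stable disk $\mathcal{F}^s_r(z) \subset \Lambda$ has a negative iterate meeting $\operatorname{int}(\Lambda)$ (Lemma~\ref{s}), so by invariance $\mathcal{F}^s_r(z)$ itself meets $\operatorname{int}(\Lambda)$ at some point $x$; taking an open ball $B \subset \operatorname{int}(\Lambda)$ around $x$ and saturating it by the strong stable leaves gives, by continuity of $\mathcal{F}^s$, an open neighborhood of $\mathcal{F}^s(x) = \mathcal{F}^s(z) \ni z$ contained in $\Lambda$, contradicting $z \in \partial\Lambda$. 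Hence $\partial\Lambda = \emptyset$ and $\Lambda = M$. The main obstacle I anticipate is verifying carefully that the two laminations plus $W^u(p)$ really produce an open subset of $\Lambda$ — i.e., handling the center direction — and making the transversality/continuity estimates near $p$ precise; this is where the hypothesis $\operatorname{index}(p) = d^s$ and the attractor property are both essential, since they guarantee $W^u(p) \subset \Lambda$ carries the full $E^c \oplus E^u$ directions.
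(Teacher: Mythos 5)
Your proposal is correct and its core is exactly the paper's argument: since $\operatorname{index}(p)=d^s$, the local unstable manifold $W^u_{\varepsilon}(p)\subset\Lambda$ is $(d^c+d^u)$-dimensional, Lemma~\ref{ss} makes $\Lambda$ saturated by $\mathcal{F}^s$, so saturating $W^u_{\varepsilon}(p)$ by strong stable leaves yields a nonempty interior, and Theorem~\ref{tint} finishes. The middle detour about establishing $\mathcal{F}^u$-saturation is unnecessary (as you yourself notice, it does not supply the center direction), and the final boundary argument is just a re-proof of Theorem~\ref{tint}, which you could simply cite.
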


\begin{proof}
 By Theorem~\ref{tint}, it suffices to prove that $\Lambda$ has non-empty interior. Consider the periodic point $p \in \operatorname{Per_{d^s}(f_{|_{\Lambda}})}$. Then, for a small $\varepsilon>0$, its local unstable manifold $W^{u}_{\varepsilon}(p)$ is a $(d^u+d^c)$-dimensional
embedded manifold contained in the attractor. By Lemma \ref{ss}, the strong stable leaf of any point in $\Lambda$ is contained in $\Lambda$. 
Thus the saturation 
of $W^{u}_{\varepsilon}(p)$ by its strong stable leaves contains an open subset of $\Lambda$, so $\Lambda$ has non-empty interior.
\end{proof}

The following  proposition is a simplified version of Corollary B in \cite{B6} for the case of partially hyperbolic attractors. 

\begin{pr}[\cite{B6}]
\label{lcompletar}
Fix $\alpha>0$ and $f \in \operatorname{Diff}^{1+\alpha }(M)$. If $\Lambda$ is a partially hyperbolic set of $f$ with $m(\Lambda) > 0$, then $\Lambda$ contain some strong stable disk and some strong unstable disk.
\end{pr}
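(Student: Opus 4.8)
This proposition is obtained by specializing Corollary~B of \cite{B6} to the partially hyperbolic setting, so strictly speaking no new argument is needed; for completeness I outline the mechanism behind it. The plan is a Lebesgue density point argument combined with the absolute continuity of the strong laminations, which is available precisely because $f$ is $C^{1+\alpha}$ (it fails, in general, for merely $C^1$ maps). I will only produce the strong stable disk, since the strong unstable one follows from the symmetric argument applied to $f^{-1}$, whose strong stable lamination is $\mathcal{F}^u$.

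First I would use the Lebesgue density theorem to pick a density point $x\in\Lambda$, so that $m(\Lambda\cap B_r(x))/m(B_r(x))\to 1$ as $r\to 0$. In a foliated chart for $\mathcal{F}^s$ around $x$ — a neighborhood which is a union of $d^s$-dimensional local strong stable plaques parametrized by a $(d^c+d^u)$-dimensional transversal — absolute continuity means that $m$ disintegrates into conditional measures equivalent to the induced Riemannian measure on the plaques, so Fubini forces the existence of a local strong stable disk $D$, through some point of $\Lambda$, with $\Lambda\cap D$ of positive Riemannian measure in $D$. Inside $D$ I then fix a density point $z\in\Lambda\cap D$ of $\Lambda\cap D$ for the leafwise measure. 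Now I iterate backwards: since $E^s$ is uniformly expanded by $Df^{-1}$, the disks $D_n:=f^{-n}(D)$ are strong stable disks whose inner radius around the interior point $f^{-n}(z)\in\Lambda$ tends to $\infty$, and $f^{-n}(\Lambda\cap D)=\Lambda\cap D_n$ by invariance of $\Lambda$. Because $f$ is $C^{1+\alpha}$, $Df^{-n}$ restricted to strong stable leaves has bounded distortion, so, as the $f^n$-preimage of $\mathcal{F}^s_{r_0}(f^{-n}z)$ is a shrinking neighborhood of $z$ in $D$ and $z$ is a density point of $\Lambda\cap D$, one gets that, at the fixed scale $r_0$, the leafwise density of $\Lambda$ in $\mathcal{F}^s_{r_0}(f^{-n}z)$ tends to $1$. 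Passing to a subsequence with $f^{-n_j}(z)\to w\in\Lambda$ and invoking continuity of the strong stable lamination, the disk $\mathcal{F}^s_{r_0}(w)$ contains a full-measure subset of $\Lambda$; since $\Lambda$ is closed, the complement of $\Lambda$ in this disk is open of zero measure, hence empty, so $\mathcal{F}^s_{r_0}(w)\subset\Lambda$ is the desired strong stable disk.

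The main obstacle is the quantitative transfer of the density-one property through the non-conformal iteration in the last step, which is exactly where bounded distortion — and hence the extra regularity $\alpha>0$ — is used; the absolute continuity invoked at the start is the other place where $C^{1+\alpha}$, rather than $C^1$, is essential. Both ingredients are by now standard in this context, and the complete details are carried out in \cite{B6}.
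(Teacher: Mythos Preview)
The paper does not give its own proof of this proposition: it is stated as a quotation (a simplified version of Corollary~B) from \cite{B6} and used as a black box. Your outline is precisely the mechanism behind the result in \cite{B6} --- Lebesgue density point, absolute continuity of the strong lamination, Fubini to find a plaque meeting $\Lambda$ in positive leafwise measure, then backward iteration with bounded distortion to promote high density to a full strong stable disk in $\Lambda$ --- so there is nothing to compare: you have correctly reconstructed the cited argument, and you have also correctly identified the two places where the $C^{1+\alpha}$ hypothesis is used.
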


\begin{lema} \label{J} 
Let $f \in \operatorname{Diff}^{1+\alpha}(M)$ and $\Lambda = \Lambda_f(U)$ be  partially hyperbolic attractor that is $s$-minimal. If $\operatorname{Per_{ds}(f_{|_{\Lambda}})} \ne \emptyset$ and
$m(\Lambda) > 0$, then $\Lambda$ is the whole manifold.
\end{lema}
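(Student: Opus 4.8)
The plan is to combine Proposition~\ref{lcompletar} with Lemma~\ref{sss}. Since $f \in \operatorname{Diff}^{1+\alpha}(M)$ and $m(\Lambda) > 0$, Proposition~\ref{lcompletar} immediately gives that $\Lambda$ contains some strong stable disk (and also some strong unstable disk, though only the stable one is needed here). So $\Lambda$ is a partially hyperbolic attractor that is $s$-minimal, contains a strong stable disk, and by hypothesis has a hyperbolic periodic point $p \in \operatorname{Per}_{d^s}(f_{|_\Lambda})$. These are exactly the hypotheses of Lemma~\ref{sss}, which then yields $\Lambda = M$.

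So the proof is essentially a one-line citation chain; the real content has already been packaged into the two results invoked. The only point that requires a moment's care is checking that the regularity hypothesis matches: Proposition~\ref{lcompletar} as stated requires $f \in \operatorname{Diff}^{1+\alpha}(M)$ for some $\alpha > 0$, which is precisely the standing assumption of this lemma, whereas Lemma~\ref{sss} only needs $f \in \operatorname{Diff}^1(M)$, so there is no mismatch. I expect no genuine obstacle here — the statement is a corollary whose purpose is to set up Theorem~C, where the $C^{1+\alpha}$ diffeomorphisms in $\mathcal{V}$ will be handled by this lemma and the general $C^1$ case by the semicontinuity argument of Lemma~\ref{>} together with a perturbation (removing the periodic point of index $d^s$, or creating empty interior) on a dense set.

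\begin{proof}
Since $f \in \operatorname{Diff}^{1+\alpha}(M)$ and $m(\Lambda) > 0$, Proposition~\ref{lcompletar} implies that $\Lambda$ contains some strong stable disk. As $\Lambda$ is a partially hyperbolic attractor that is $s$-minimal, contains a strong stable disk, and satisfies $\operatorname{Per}_{d^s}(f_{|_\Lambda}) \ne \emptyset$, Lemma~\ref{sss} gives that $\Lambda$ is the whole manifold.
\end{proof}
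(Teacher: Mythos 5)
Your proof is correct and is exactly the paper's argument: Proposition~\ref{lcompletar} supplies the strong stable disk from $m(\Lambda)>0$ and the $C^{1+\alpha}$ regularity, and Lemma~\ref{sss} then concludes $\Lambda=M$. Nothing further is needed.
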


\begin{proof} By Proposition~\ref{lcompletar} there is a strong stable disk $D$ contained in $\Lambda$. Now Lemma \ref{sss} implies the statement. \end{proof}

We are now ready to prove Theorem~C. 

\begin{proof}[Proof of theorem~C] Since $f$ is $C^1$-generic and $\Lambda_f(U)$ is $s$-minimal, we can assume that $\Lambda_f(U)$  is generically $s$-minimal (see Proposition \ref{unif2}).  Let  $\mathcal {U}$ be a compatible neighbourhood of $f$ and $\mathcal{J}_0$ be the residual subset of $\mathcal{U}$ of diffeomorphisms $g$ such that $\Lambda_g(U)$ is $s$-minimal. 

\begin{afir}
 \label{cl.estrelinha}
For every $g \in \mathcal{J}_0$, $\varepsilon > 0$, and 
every hyperbolic periodic point $a \in \Lambda_g(U) \cap \operatorname{Per}_{d^s+1}(g) $ it holds that
$$
\operatorname{int}(W_{\varepsilon}^s(a) \cap \Lambda_g(U)) = \emptyset. $$
Here the interior refers to the topology of $W_{\varepsilon}^s(a)$.
\end{afir}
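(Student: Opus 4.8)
The plan is to argue by contradiction, assuming that for some $g\in\mathcal{J}_0$, some $\varepsilon>0$, and some $a\in\Lambda_g(U)\cap\operatorname{Per}_{d^s+1}(g)$ the relative interior of $W^s_\varepsilon(a)\cap\Lambda_g(U)$ in $W^s_\varepsilon(a)$ is non-empty. Replacing $g$ by an iterate (and $a$ by a point in its orbit), I may assume $a$ is fixed. Since the index of $a$ is $d^s+1=d^s+d^c$ (here $d^c=1$), the local stable manifold $W^s_\varepsilon(a)$ is a $(d^s+1)$-dimensional embedded disk, and it contains the strong stable disk $\mathcal{F}^s_\varepsilon(a)$ as a codimension-one submanifold; moreover $W^s_\varepsilon(a)$ is locally foliated by the strong stable leaves $\mathcal{F}^s$ (the strong stable lamination restricted to $W^s(a)$ gives a genuine foliation of this disk, since $E^s$ integrates uniquely and the central direction inside $W^s(a)$ provides the transverse parameter). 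The key point I want to extract from the assumption is that $W^s_\varepsilon(a)\cap\Lambda_g(U)$ contains a small open $(d^s+1)$-disk $B\subset W^s_\varepsilon(a)$, hence $B$ is entirely inside $\Lambda_g(U)$.

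Next I would use invariance and attraction. Since $a\in\Lambda_g(U)$ and $W^s(a)\subset\overline{U}$ eventually enters $U$, the forward orbit of $B$ stays in $\Lambda_g(U)$ (the attractor is $g$-invariant and absorbs $W^s(a)$), and since $B\subset W^s_\varepsilon(a)$, iterating forward contracts $B$ toward $a$ along $W^s$. But I want something in the \emph{other} direction: $B$ is an open subset of $W^s_\varepsilon(a)$, and $W^s_\varepsilon(a)$ is foliated by strong stable plaques; therefore $B$ contains a whole strong stable plaque $\mathcal{F}^s_\rho(x)$ for some $x\in B\subset\Lambda_g(U)$ and some $\rho>0$. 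That is, $\Lambda_g(U)$ contains a strong stable disk. Now I invoke Lemma~\ref{ss}: since $\Lambda_g(U)$ is $s$-minimal and contains a strong stable disk, it contains the entire strong stable leaf of every point of $\Lambda_g(U)$. In particular $\mathcal{F}^s(a)\subset\Lambda_g(U)$, and then the saturation argument of Lemma~\ref{sss} applies verbatim: the periodic point $a$ has index $d^s+1=d^s+d^c$, so its local unstable manifold $W^u_\varepsilon(a)$ is a $d^u$-dimensional embedded disk inside the attractor, and saturating $W^u_\varepsilon(a)$ by the strong stable leaves (all contained in $\Lambda_g(U)$) produces an open subset of $\Lambda_g(U)$. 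Hence $\operatorname{int}(\Lambda_g(U))\neq\emptyset$, and by Theorem~\ref{tint} we conclude $\Lambda_g(U)=M$, contradicting that $\Lambda_f(U)$ — and hence, by the compatible neighborhood, $\Lambda_g(U)$ — is a \emph{proper} attractor.

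Wait — there is a subtlety I should double-check, and I expect it to be the main obstacle: the statement of Claim~\ref{cl.estrelinha} does \emph{not} explicitly assume $\Lambda_g(U)$ is a proper attractor; that hypothesis lives in the ambient setup of Theorem~C, where $\Lambda_f(U)$ is a partially hyperbolic $s$-minimal \emph{proper} attractor and $\mathcal{U}$ is a compatible neighborhood, so $\Lambda_g(U)$ is an attractor for all $g\in\mathcal{U}$ and remains proper after shrinking $\mathcal{U}$ (properness is open: if $\Lambda_g(U)=M$ then $U=M$, impossible). So the contradiction with Theorem~\ref{tint} is legitimate in context. The one genuinely delicate step is the claim that an open subset $B$ of $W^s_\varepsilon(a)$ contains a full strong stable plaque — this requires knowing that inside $W^s(a)$ the strong stable lamination is a continuous foliation with plaques of uniform size on compact pieces, which follows from the normal hyperbolicity of $\mathcal{F}^s$ inside $W^s(a)$ together with the uniform contraction on $E^s$; I would cite the lamination properties from Section~3 of \cite{p1}. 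Once that local structure is in hand, the rest is the Lemma~\ref{ss} / Lemma~\ref{sss} / Theorem~\ref{tint} chain already available in the paper.
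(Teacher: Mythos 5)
Your overall strategy (contradiction, produce an open subset of $\Lambda_g(U)$, invoke Theorem~\ref{tint}, contradict properness) is the right one, and your handling of the properness hypothesis is correct. But the final saturation step has a genuine dimensional gap. You take the open ball $B\subset W^s_\varepsilon(a)\cap\Lambda_g(U)$, extract from it a strong stable plaque, apply Lemma~\ref{ss} to get $\mathcal{F}^s(x)\subset\Lambda_g(U)$ for all $x$, and then claim that "the saturation argument of Lemma~\ref{sss} applies verbatim" to $W^u_\varepsilon(a)$. It does not: Lemma~\ref{sss} requires a periodic point of index $d^s$, precisely so that its local unstable manifold has dimension $d^u+d^c$ and its strong stable saturation has dimension $(d^u+d^c)+d^s=\dim M$. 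Your point $a$ has index $d^s+1=d^s+d^c$, so $W^u_\varepsilon(a)$ is only $d^u$-dimensional, and saturating it by the $d^s$-dimensional strong stable leaves yields a set of dimension $d^u+d^s=\dim M - d^c=\dim M-1$. That is a codimension-one set, not an open set, so you do not reach $\operatorname{int}(\Lambda_g(U))\neq\emptyset$ this way.

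The repair is simpler than your detour and is what the paper does: saturate the ball $B$ itself by strong \emph{unstable} leaves. Since $\Lambda_g(U)$ is an attractor, the strong unstable leaf of every point of $\Lambda_g(U)$ is contained in $\Lambda_g(U)$; the ball $B$ is a $(d^s+d^c)$-dimensional disk transverse to $E^u$, and its $\mathcal{F}^u$-saturation has dimension $(d^s+d^c)+d^u=\dim M$, hence contains an open subset of $M$ inside $\Lambda_g(U)$. From there Theorem~\ref{tint} and properness finish the argument exactly as you intended. Note that with this route the passage through Lemma~\ref{ss} and the strong stable plaque inside $B$ becomes unnecessary.
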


\begin{proof}[Proof of the claim]
 The proof is by contradiction.
Assume that there are $\varepsilon > 0$ and $a \in \Lambda_g(U) \cap \operatorname{Per}_{d^s+1}(g) $ such that
$\operatorname{int}(W_{\varepsilon}^s(a,g) \cap \Lambda_g(U))$
contains an open ball $B$ of $W^s_{\varepsilon}(a,g)$.
By saturating $B$
 with strong unstable leaves (which are subsets of the attractor $\Lambda_g(U)$) 
 we get an open set (relative to the ambient manifold $M$)
contained in $\Lambda_g(U)$. Thus $\Lambda_g(U)$ has non-empty interior
and, by Theorem~\ref{tint} it is the whole manifold, contradicting the fact that
$\Lambda_g(U)$ is a proper attractor.
\end{proof}

Consider a diffeomorphism $f$ as in the statement of Theorem C and 
a pair of hyperbolic periodic points $p,q \in \Lambda_f(U)$  
with indices  $d^s$ and $d^{s}+1$, respectively (these points exist by item (2) of Proposition \ref{unif} and Remark~\ref{r.ss+1}). Let $\mathcal{W}_p$ and  $\mathcal{V}_{p,q}$  be the open sets given by items (3) and (5) of Proposition \ref{unif}, respectively. By shrinking $\mathcal{W}_p$ if necessary, we can assume that $\mathcal{W}_p \subset \mathcal{V}_{p,q}$, so the continuation $q_g$ of $q$ is well defined for every $g \in \mathcal{W}_p$.

\begin{afir}
The map $\phi$ given by $g \mapsto W^s_{\varepsilon}(q_g, g) \cap \Lambda_g(U)$,
defined on $\mathcal{W}_p$, is upper semicontinuous.
\end{afir}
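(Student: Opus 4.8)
The plan is to unwind the definition of upper semicontinuity for the set-valued map $\phi$ and reduce the statement to a standard compactness argument combining two ingredients: the upper semicontinuity of the isolated set $g \mapsto \Lambda_g(U)$ (recalled in Section~\ref{pre}) and the continuous dependence on $g$ of the local stable manifold $W^s_{\varepsilon}(q_g,g)$. Recall that $\phi$ is upper semicontinuous at $g_0 \in \mathcal{W}_p$ if for every open set $V \subset M$ containing the compact set $K_0 := W^s_{\varepsilon}(q_{g_0},g_0) \cap \Lambda_{g_0}(U)$ there is a neighborhood $\mathcal{N} \subset \mathcal{W}_p$ of $g_0$ such that $\phi(g) \subset V$ for all $g \in \mathcal{N}$. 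Note that since $\mathcal{W}_p \subset \mathcal{V}_{p,q}$, the continuation $q_g$ is a well-defined hyperbolic periodic point of $g$ for every $g \in \mathcal{W}_p$, so $\phi$ is genuinely defined on all of $\mathcal{W}_p$.

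First I would argue by contradiction. If the conclusion fails at some $g_0$, there are an open set $V \supset K_0$, a sequence $g_n \to g_0$ in $\mathcal{W}_p$, and points $x_n \in W^s_{\varepsilon}(q_{g_n},g_n) \cap \Lambda_{g_n}(U)$ with $x_n \notin V$. Since $M \setminus V$ is compact, after passing to a subsequence we may assume $x_n \to x \in M \setminus V$. The goal is then to show $x \in K_0 \subset V$, which is the desired contradiction.

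That $x \in \Lambda_{g_0}(U)$ follows from the upper semicontinuity of $g \mapsto \Lambda_g(U)$: every neighborhood $W$ of the compact set $\Lambda_{g_0}(U)$ eventually contains $\Lambda_{g_n}(U)$, hence eventually contains $x_n$, so $x \in \overline{W}$; since this holds for all such $W$ and $\Lambda_{g_0}(U)$ is closed, we get $x \in \Lambda_{g_0}(U)$. That $x \in W^s_{\varepsilon}(q_{g_0},g_0)$ follows from the continuous dependence of the hyperbolic continuation $q_g$ on $g$ together with the stable manifold theorem with parameters: the discs $W^s_{\varepsilon}(q_g,g)$ converge to $W^s_{\varepsilon}(q_{g_0},g_0)$ (in the $C^1$, hence Hausdorff, sense), so the limit $x$ of the points $x_n \in W^s_{\varepsilon}(q_{g_n},g_n)$ lies in $W^s_{\varepsilon}(q_{g_0},g_0)$. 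Combining the two inclusions gives $x \in K_0 \subset V$, contradicting $x \notin V$.

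The only delicate point is making precise the continuous dependence of $W^s_{\varepsilon}(q_g,g)$ on $g$ in the $C^1$ topology; this is classical, since the uniform hyperbolicity of the periodic orbit of $q$ persists under $C^1$-small perturbations and the graph transform producing the local stable manifold depends continuously on parameters, but one should be mindful of whether $W^s_{\varepsilon}$ denotes the open or the closed $\varepsilon$-disc. Working with the closed disc (or, equivalently, absorbing a slightly larger radius) keeps the Hausdorff-convergence step clean; everything else is the routine extraction of convergent subsequences on the compact manifold $M$.
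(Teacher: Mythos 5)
Your proof is correct and uses essentially the same two ingredients as the paper: the continuous dependence of the local stable manifold $W^s_{\varepsilon}(q_g,g)$ on $g$ and the upper semicontinuity of $g\mapsto \Lambda_g(U)$, combined by the standard compactness/subsequence argument. If anything, your write-up is cleaner: the paper additionally invokes an observation about the unstable saturation of $\phi(g)$ being open in $\Lambda_g(U)$, which your argument shows is not needed for this claim.
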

\begin{proof}

Observe that, for every $g \in \mathcal{W}_p$, the set $\{ \mathcal{F}^u_{\varepsilon}(x) \ | \  x \in W^s_{\varepsilon}(q_g, g)\cap \Lambda_g(U)\}$ is an open subset of $\Lambda_g(U)$. Since $W^s_{\varepsilon}(p_g,g)$ varies continuously, this observation shows that an upper discontinuity of $\phi$ would imply an upper discontinuity of $\Lambda_g(U)$. However, such a discontinuity for $\Lambda_g(U)$ is not possible as attractors vary upper semicontinuously. 
\end{proof}

As a consequence of this claim, there is a residual subset  $\mathcal{J}_1 \subset \mathcal{W}_p$ 
consisting of continuity points of the map $\phi$.

By Claim~\ref{cl.estrelinha} and the definition of $\mathcal{J}_1$ 
we conclude that, for every $h \in \mathcal{J}_0 \cap \mathcal{J}_1$ (that is a subset of $\mathcal{W}_p$), 
there is a neighborhood $\mathcal{U}_h$ of $h$ such that
\begin{equation} \label{e.notsubset}
W_{\varepsilon}^s(q_g , g) \not\subset  \Lambda_g(U) \quad \mbox{for all} \quad g \in \mathcal{U}_h.
\end{equation}

The set $\mathcal{V}_p= \bigcup_{h \in \mathcal{J}_0 \cap \mathcal{J}_1} \mathcal{U}_h$  is an open and dense subset of $\mathcal{W}_p$.  

\begin{afir} \label{c.oed} For every $g \in \mathcal{V}_p$ the  attractor $\Lambda_g(U)$ does not contain any strong stable disk, and consequently it has empty interior.
\end{afir}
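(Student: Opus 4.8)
\textbf{Proof proposal for Claim~\ref{c.oed}.}

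The plan is to argue by contradiction: suppose some $g \in \mathcal{V}_p$ has an attractor $\Lambda_g(U)$ containing a strong stable disk. Since $g \in \mathcal{V}_p \subset \mathcal{W}_p \subset \mathcal{J}_0$ (after intersecting with the residual set, or rather using that $\mathcal{V}_p$ is covered by the neighborhoods $\mathcal{U}_h$), we know $\Lambda_g(U)$ is $s$-minimal, so by Lemma~\ref{ss} it then contains the entire strong stable leaf $\mathcal{F}^s(x)$ of every $x \in \Lambda_g(U)$. The strategy is to use item (5) of Proposition~\ref{unif} (the set $\mathcal{W}_p$): for every strong stable disk $D$ centered at a point of $\Lambda_g(U)$ we have $H(p_g,g) \subset \overline{\mathcal{O}^-_g(D)}$. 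Combining this with the fact that $\Lambda_g(U)$ is saturated by strong stable leaves and is closed and invariant, I want to deduce that the continuation $q_g$ (which lies in $H(p_g,g) \subset \Lambda_g(U)$ by the inclusions in items (3) and (5)) has its entire local stable manifold $W^s_\varepsilon(q_g,g)$ inside $\Lambda_g(U)$.

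First I would make precise why $q_g \in \Lambda_g(U)$: by item (3) of Proposition~\ref{unif}, on $\mathcal{V}_{p,q} \supset \mathcal{W}_p$ we have $W^s(\mathcal{O}_g(q_g)) \subset \overline{W^s(\mathcal{O}_g(p_g))}$, and since $p_g, q_g$ are homoclinically related (or at least $q_g \in H(p_g,g)$ via the robust-transitivity part, or one argues directly from item (5)), $q_g$ belongs to the attractor. Next, the key step: since $\Lambda_g(U)$ contains the full strong stable leaf through each of its points, in particular it contains $\mathcal{F}^s(q_g)$. I claim that $W^s_\varepsilon(q_g,g)$ is contained in the saturation of a small piece of $W^u_\varepsilon(q_g,g)$ (which lies in the attractor, as $q_g$ has index $d^s+1$ so its unstable manifold has dimension $d^u+d^c-1+1 = d^u+d^c$... wait, index $d^s+1$ means $\dim W^u(q_g) = d^u+d^c-1$; the unstable manifold need not be saturated by the attractor directly). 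Instead, the cleaner route: $W^s_\varepsilon(q_g,g)$ is foliated by strong stable disks $\mathcal{F}^s(z)$ over $z$ ranging in a $d^c$-dimensional center-stable transversal inside $W^s_\varepsilon(q_g,g)$; so it suffices to show each such $z$ lies in $\Lambda_g(U)$. For this I use item (5): any point $z \in W^s_\varepsilon(q_g,g)$ is accumulated by the backward orbit of a strong stable disk centered in $\Lambda_g(U)$ — indeed the strong stable disk $\mathcal{F}^s_r(z)$, pushed backward, spreads and its negative orbit is dense enough that $H(p_g,g)$, and in fact all of $\Lambda_g(U)$ by $s$-minimality, sits in its closure; running this together with the saturation property shows $z \in \Lambda_g(U)$. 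Hence $W^s_\varepsilon(q_g,g) \subset \Lambda_g(U)$.

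This contradicts \eqref{e.notsubset}, which holds on the neighborhood $\mathcal{U}_h \ni g$ for the appropriate $h$, i.e. $W^s_\varepsilon(q_g,g) \not\subset \Lambda_g(U)$. Therefore no $g \in \mathcal{V}_p$ can have an attractor containing a strong stable disk. The "consequently" is immediate: if $\Lambda_g(U)$ had non-empty interior it would contain a strong stable disk (any small ball contains one), contradiction; so $\operatorname{int}(\Lambda_g(U)) = \emptyset$.

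\textbf{Main obstacle.} The delicate point is the passage "$\Lambda_g(U)$ contains a strong stable disk, and contains $\mathcal{F}^s(q_g)$" $\implies$ "$W^s_\varepsilon(q_g,g) \subset \Lambda_g(U)$". The subtlety is that $W^s_\varepsilon(q_g,g)$ is $(d^s+1)$-dimensional while its strong stable leaves are only $d^s$-dimensional, so one genuinely needs the extra center direction. I expect the argument to hinge precisely on item (5) of Proposition~\ref{unif}: the inclusion $H(p_g,g) \subset \overline{\mathcal{O}^-_g(D)}$ for \emph{every} strong stable disk $D$ centered in the attractor gives a way to "fill in" the center direction — as $z$ moves along the center-stable transversal in $W^s_\varepsilon(q_g,g)$, the disks $\mathcal{F}^s_r(z)$ are all strong stable disks whose backward orbits accumulate on the whole homoclinic class, and using that the attractor is closed, saturated by strong stable leaves, and that $z$ itself is in the forward-invariant closure, one pins $z$ inside $\Lambda_g(U)$. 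Getting this accumulation argument to actually place $z$ (not just nearby homoclinic points) in the attractor is where care is needed; using that $\Lambda_g(U)$ is an attractor (hence forward-invariant and equal to the intersection of its forward iterates of $U$) should close the gap.
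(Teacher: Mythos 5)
Your overall target is the right one (derive $W^s_\varepsilon(q_g,g)\subset\Lambda_g(U)$ and contradict \eqref{e.notsubset}), but there are two genuine gaps. First, you invoke $s$-minimality of $\Lambda_g(U)$ for an arbitrary $g\in\mathcal{V}_p$ in order to apply Lemma~\ref{ss}. That is not available: $s$-minimality holds only on the residual set $\mathcal{J}_0$, while the claim is about the \emph{open} set $\mathcal{V}_p$; neither $\mathcal{W}_p\subset\mathcal{J}_0$ nor ``being covered by the $\mathcal{U}_h$'' gives $s$-minimality of $g$ itself. The robust substitute for $s$-minimality is precisely item (5) of Proposition~\ref{unif}, which holds for every $g\in\mathcal{W}_p$: applied to the hypothesized disk $D\subset\Lambda_g(U)$, it gives $H(p_g,g)\subset\overline{\mathcal{O}^-_g(D)}\subset\Lambda_g(U)$ (the last inclusion by invariance and closedness), and then the first half of the argument of Lemma~\ref{ss} (backward iterates of $D$ grow and accumulate on $p_g$, plus continuity of the lamination) yields $\mathcal{F}^s(\mathcal{O}_g(p_g),g)\subset\Lambda_g(U)$ without any minimality assumption on $g$.

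Second, your mechanism for ``filling in the center direction'' does not close. You propose to apply item (5) to strong stable disks $\mathcal{F}^s_r(z)$ centered at points $z\in W^s_\varepsilon(q_g,g)$, but item (5) only applies to disks centered at points of $\Lambda_g(U)$, and $z\in\Lambda_g(U)$ is exactly what you are trying to prove — the step is circular; moreover, knowing that $H(p_g,g)$ lies in the closure of the backward orbit of such a disk says nothing about $z$ itself belonging to the attractor. The missing ingredient is item (3) of Proposition~\ref{unif}, which you quote but use only to discuss membership of $q_g$: since $\operatorname{index}(p_g)=d^s$, the set $W^s(\mathcal{O}_g(p_g))$ is exactly the union of the strong stable leaves along the orbit of $p_g$, hence contained in $\Lambda_g(U)$ by the previous paragraph; then $W^s(\mathcal{O}_g(q_g))\subset\overline{W^s(\mathcal{O}_g(p_g))}\subset\Lambda_g(U)$ for every $g\in\mathcal{W}_p\subset\mathcal{V}_{p,q}$, which contradicts \eqref{e.notsubset}. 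It is this robust inclusion of stable manifolds — not a leafwise saturation of $W^s_\varepsilon(q_g,g)$ — that supplies the extra center direction.
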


\begin{proof}
Suppose that there is $g \in \mathcal{V}_p$ for which $\Lambda_g(U)$ has a strong stable disk $D \subset \Lambda_g(U)$. By the invariance and closeness of $\Lambda_g(U)$, any accumulation point of the backward orbit of $D$ belongs to $\Lambda_g(U)$. By item (4) of Proposition \ref{unif}, the closure of the negative orbit of $D$ contains $H(p_g,g)$, so we conclude that $\overline{\mathcal{F}^{s}(p_g , g)} \subset \Lambda_g(U)$. Now, item (3) of Proposition \ref{unif} implies that $W^s(q_g,g) \subset \Lambda_g(U)$, contradicting Equation~\eqref{e.notsubset}. 
\end{proof}

Recall that $\mathcal{V}_p$ depends on the choice of $f \in \operatorname{Diff}^1(M)$ and, since $f \in \overline{\mathcal{W}_p}$, we also have $f \in \overline{\mathcal{V}_p}$. Hence, to obtain item (1) of Theorem~C, we apply Claim~\ref{c.oed} with respect to every diffeomorphism in $\mathcal{R} \cap \mathcal{U}$. The union of all open sets obtained in this way is the announced open and dense subset $\mathcal{V}$~of~$\mathcal{U}$.

\smallskip

Fix $\alpha >0$. To prove the second part of the theorem, observe that, if $g \in \mathcal{V} \cap \operatorname{Diff}^{1+\alpha}(M)$ is such that $m(\Lambda_g(U)) > 0$, then it contains a strong stable disk (see Proposition~\ref{lcompletar}). This contradicts Claim~\ref{c.oed}, since we have taken $g \in \mathcal{V}$. This proves that the subset of $\mathcal{U}$ for which $\Lambda_g(U)$ has zero Lebesgue measure contains  every $C^{1+\alpha}$ diffeomorphism of $\mathcal{V}$. 
\smallskip

In particular, for every $C^2$ diffeomorphisms $g$ in $\mathcal{V}$, the attractor $\Lambda_g(U)$ has zero Lebesgue measure. Since the subset of $C^2$ diffeomorphisms in $\mathcal{V}$ is $C^1$-dense in $\mathcal{V}$, Corollary~\ref{gen} implies that there is a residual (with respect to the $C^1$ topology) subset of $\mathcal{V}$ where the attractors have zero Lebesgue measure.
\end{proof}

\bigskip

\section{Spectral Decomposition} \label{s.d}
 
\medskip
 
 
In this section we see how $u$- and $s$-minimal homoclinic classes are decomposed into a finite number  of compact sets which are permuted   by the dynamics and verify the strong recurrence property of mixing. Moreover, the number of pieces in this decomposition is exactly  the minimal constant $d$  in Definition \ref{def.minimal}.  Let us describe it more precisely.  
 
%
%
%
%
%
%

\begin{de}[Spectral decomposition] \label{d.sd} \em{ We say that a transitive compact invariant set $\Lambda$ admits a \emph{spectral decomposition} if there exist compact sets $\Lambda_1, \Lambda_2,..,\Lambda_k$ satisfying:
 \begin{enumerate}
 
 \item $\Lambda= \bigcup_{i=1}^k \Lambda_i$.
 
 \smallskip
 
 \item There is a cyclic permutation $\sigma: \{1,...,k\} \circlearrowleft $ such that  $f(\Lambda_i) = \Lambda_{\sigma(i)}$ for all $i \in \{1,...,k\}$. In particular, $\Lambda_i$ is periodic with period $k$.  

\smallskip

\item They are pairwise disjoint: $\Lambda_i \cap \Lambda_j =\emptyset$ for all $i \not= j$ in $\{1,...,k\}$.


\smallskip

\item For every $i \in \{1,...,k\}$, $\Lambda_i$ is topologically mixing for the map $f^{k}$.\\

 \noindent We call the sets $\Lambda_i$ the \emph{basic components} or the \emph{basic pieces} of $\Lambda$.

\end{enumerate} }
\end{de}

\begin{re} \label{r.multiple} \em{As the permutation in item (2) is cyclic, the period of any periodic point in $\Lambda$ is a multiple of the number $k$ of components of $\Lambda$.} 
\end{re}

The main results in this section are Theorem D and its robust version for robustly transitive attractors in Theorem~E. All the statements and proves in this section deal only with the $s$-minimal case. The $u$-minimal case readily follows by applying these results to the inverse map $f^{-1}$. \smallskip

To prove these theorems we start with some auxiliary lemmas. 

\begin{lema} \label{min}  Let $\Lambda=H(p,f)$ be an isolated $s$-minimal set with minimal constant $d$ and $\operatorname{index}(p) =d^s$. Let $x \in \Lambda$ and $k >1$ be such that $$\bigcup_{i=1}^{k} \overline{\mathcal{F}_{\Lambda}^{s}(f^i(x))} = \Lambda.$$
Then $k \geq d$. 
\end{lema}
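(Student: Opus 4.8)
The statement asks us to show that no fewer than $d$ iterates can suffice to achieve the density property defining $s$-minimality. The obvious idea is a contradiction argument: suppose $1 < k < d$ and that $\bigcup_{i=1}^{k}\overline{\mathcal{F}^s_{\Lambda}(f^i(x))} = \Lambda$ for some $x \in \Lambda$. Since $d$ is the \emph{minimal} constant, there must be some $y \in \Lambda$ for which $\bigcup_{i=1}^{k}\overline{\mathcal{F}^s_{\Lambda}(f^i(y))} \neq \Lambda$; otherwise $k$ would already witness the definition of $s$-minimality with a smaller constant. So the core of the argument is to show that the density property for one point $x$ with $k$ iterates \emph{propagates} to every point of $\Lambda$, contradicting the minimality of $d$.

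The mechanism for this propagation should be Lemma~\ref{l.restriction0}, which lets us compare the closures of strong stable leaves inside $\Lambda$ whenever the corresponding full strong stable leaves are nested (up to closure). First I would use $s$-minimality together with transitivity of $H(p,f)$ to move the ``good'' point $x$ around: given an arbitrary $z \in \Lambda$, I want to find an iterate $f^{j}(z)$ (or a point in the orbit of $z$) whose strong stable leaf is approximated by — or sits inside the closure of — the orbit-translates of $\mathcal{F}^s(x)$, so that Lemma~\ref{l.restriction0} transfers the covering property from $x$ to $z$. Concretely, since $\Lambda = H(p,f)$ with $\operatorname{index}(p) = d^s$, the strong stable leaf $\mathcal{F}^s(p)$ and its orbit are dense in the right sense (this is exactly the kind of statement recorded around Lemma~\ref{bom} and used in Lemma~\ref{l.restriction0}), and strong stable leaves vary continuously; combining these, the family $\{\overline{\mathcal{F}^s_\Lambda(f^i(x))}\}_{i=1}^k$ covering $\Lambda$ should force, for each $w \in \Lambda$, that $\{\overline{\mathcal{F}^s_\Lambda(f^i(w))}\}_{i=1}^k$ also covers $\Lambda$, because the covering is a ``closed'' condition stable under the continuity of $\mathcal{F}^s$ and under taking closures, and the orbit of any leaf accumulates on the orbit of any other leaf. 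That would show $k$ is itself a valid minimal constant, whence $k \geq d$ by minimality of $d$ — a contradiction with $k < d$, leaving only $k \geq d$.

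In more detail, the key step I expect is: fix $w \in \Lambda$ arbitrary; by $s$-minimality applied to $x$, for each $i$ the set $\overline{\mathcal{F}^s_\Lambda(f^i(x))}$ is a large closed piece, and their union is everything. Pick a subsequence of backward (or forward) iterates of $x$ whose leaves converge to $\mathcal{F}^s(w)$-related data; apply continuity of the lamination to see that $\mathcal{F}^s(w)$ lies in the closure of a translate $\mathcal{F}^s(f^{i}(x))$; then Lemma~\ref{l.restriction0} gives the inclusion of the $\Lambda$-intersections of these leaves up to closure, and one tracks the index shift carefully so that the $k$ iterates $f^1(w),\dots,f^k(w)$ (rather than $f^1(x),\dots,f^k(x)$) do the covering. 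The delicate bookkeeping is precisely this index/iterate matching: one must ensure that replacing $x$ by $w$ does not require more than $k$ translates, which is where the cyclic structure of how leaves of $\Lambda = H(p,f)$ permute under $f$ (and the fact that $\operatorname{index}(p) = d^s$, so Lemmas~\ref{bom} and~\ref{l.restriction0} apply) is used.

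\textbf{Main obstacle.} The hard part will be the leaf-transfer step: making rigorous that ``$\mathcal{F}^s(x)$ and its $k$ orbit-translates cover $\Lambda$'' forces the analogous statement for every other point $w$, with \emph{the same} number $k$ of translates and without losing track of which iterate goes with which. Continuity of the lamination gives accumulation of leaves, but upgrading accumulation of whole leaves to an inclusion of closures of the $\Lambda$-restricted leaves is exactly the content of Lemma~\ref{l.restriction0}, and applying it correctly requires that every leaf in sight intersects $W^u(p)$ transversally in homoclinic points — that is where $\operatorname{index}(p) = d^s$ and Lemma~\ref{bom} enter. Once that transfer is in place, the contradiction with the minimality of $d$ is immediate.
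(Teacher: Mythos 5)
Your overall architecture is the right one — and it is the paper's: show that the $k$-fold covering propagates from the single point $x$ to every point of $\Lambda$ via the continuity of the lamination and Lemma~\ref{l.restriction0}, and then conclude $k\geq d$ from the definition of the minimal constant. But the key inclusion in your transfer step points the wrong way, and as written the argument proves nothing. You propose to show that for an arbitrary $w\in\Lambda$ one has $\mathcal{F}^{s}(w)\subset\overline{\mathcal{F}^{s}(f^{i}(x))}$ for some $i$, and then apply Lemma~\ref{l.restriction0} to get $\mathcal{F}^{s}_{\Lambda}(w)\subset\overline{\mathcal{F}^{s}_{\Lambda}(f^{i}(x))}$. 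Iterating this only yields
$$
\bigcup_{j=1}^{k}\overline{\mathcal{F}^{s}_{\Lambda}(f^{j}(w))}\;\subset\;\bigcup_{j=1}^{k}\overline{\mathcal{F}^{s}_{\Lambda}(f^{i+j}(x))}\;\subset\;\Lambda,
$$
i.e.\ the $w$-union is \emph{contained in} $\Lambda$ — a vacuous statement. No amount of "index bookkeeping" extracts the needed equality from a containment in this direction; you cannot conclude that the $k$ translates of $w$'s leaf cover $\Lambda$ from the fact that each of them sits inside a piece of the $x$-covering.

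The fix is to reverse the roles, which is exactly what the paper does. Apply $s$-minimality (with the full constant $d$) to the \emph{arbitrary} point $y$: since $\bigcup_{i=1}^{d}\overline{\mathcal{F}^{s}_{\Lambda}(f^{i}(y))}=\Lambda$, the distinguished point $x$ lies in $\overline{\mathcal{F}^{s}_{\Lambda}(f^{m}(y))}$ for some $m\leq d$. Continuity of the lamination upgrades this to $\mathcal{F}^{s}(x)\subset\overline{\mathcal{F}^{s}(f^{m}(y))}$, and Lemma~\ref{l.restriction0} (this is where $\operatorname{index}(p)=d^{s}$ and Lemma~\ref{bom} are used, as you correctly anticipated) gives $\mathcal{F}^{s}_{\Lambda}(x)\subset\overline{\mathcal{F}^{s}_{\Lambda}(f^{m}(y))}$. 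Now the $x$-leaf translates are trapped inside a shifted window of $y$-leaf translates, so
$$
\Lambda=\bigcup_{i=1}^{k}\overline{\mathcal{F}^{s}_{\Lambda}(f^{i}(x))}\;\subset\;f^{m}\Big(\bigcup_{i=1}^{k}\overline{\mathcal{F}^{s}_{\Lambda}(f^{i}(y))}\Big)\;\subset\;\Lambda,
$$
and invariance of $\Lambda$ forces $\bigcup_{i=1}^{k}\overline{\mathcal{F}^{s}_{\Lambda}(f^{i}(y))}=\Lambda$. Since $y$ was arbitrary, $k$ witnesses the $s$-minimality definition and hence $k\geq d$. Note also that no appeal to transitivity of $H(p,f)$, to denseness of the orbit of $\mathcal{F}^{s}(p)$, or to a convergent subsequence of iterates is needed: locating $x$ inside one piece of the $d$-covering of $y$ is immediate from the covering itself.
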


\begin{proof} Fix $y \in \Lambda$. From $s$-minimality, we get that $$\bigcup_{i=1}^{d} \overline{\mathcal{F}_{\Lambda}^{s}(f^i(y))} = \Lambda.$$ 
Then there is some $m \in \{1,...,d\}$ such that $x \in \overline{\mathcal{F}^{s}_{\Lambda}(f^m(y))}$. It follows from the continuity of the foliation that $\mathcal{F}^{s}(x) \subset \overline{\mathcal{F}^{s}(f^m(y))}$ (see Proposition 5.4 of \cite{p1}). By Lemma~\ref{l.restriction0}, we get that:

$$ \Lambda = \bigcup_{i=1}^{k} \overline{\mathcal{F}_{\Lambda}^{s}(f^i(x))} \subset \bigcup_{i=1}^{k} \overline{\mathcal{F}_{\Lambda}^{s}(f^{m+i}(y))}= f^m(\bigcup_{i=1}^{k} \overline{\mathcal{F}_{\Lambda}^{s}(f^{i}(y))}) \subset \Lambda.$$

Thus $f^m(\bigcup_{i=1}^{k} \overline{\mathcal{F}_{\Lambda}^{s}(f^{i}(y))}) = \Lambda$, and consequently $\bigcup_{i=1}^{k} \overline{\mathcal{F}_{\Lambda}^{s}(f^{i}(y))} = \Lambda$. As it holds for every $y \in \Lambda$, the constant  $k$ satisfies the $s$-minimality condition. Now, the definition of minimal constant implies that $k \geq d$. 
\end{proof}

\begin{lema} \label{disj}  Let $\Lambda$ be as in Lemma~\ref{min}. For every $x \in \Lambda$ the sequence of sets $\{\overline{\mathcal{F}^{s}_{\Lambda}(f^n(x))}\}_{n=1}^d$ is pairwise disjoint.
\end{lema}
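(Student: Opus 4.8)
\textbf{Proof plan for Lemma~\ref{disj}.}

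The plan is to argue by contradiction: suppose that for some $x \in \Lambda$ and some $1 \le a < b \le d$ the sets $\overline{\mathcal{F}^s_\Lambda(f^a(x))}$ and $\overline{\mathcal{F}^s_\Lambda(f^b(x))}$ intersect. The goal is to show that then the $d$ iterates in the $s$-minimality union are ``redundant'', so that fewer than $d$ iterates already cover $\Lambda$, contradicting the minimality of the constant $d$ via Lemma~\ref{min}. The key structural fact I would extract first is that two strong stable leaves are either disjoint or one is contained in the closure of the other (or, more precisely, that an intersection of the closures forces an inclusion at the level of leaves); this is the continuity-of-the-foliation statement quoted as Proposition~5.4 of \cite{p1} and already used in the proof of Lemma~\ref{min}. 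Combining this with Lemma~\ref{l.restriction0}, an intersection $z \in \overline{\mathcal{F}^s_\Lambda(f^a(x))} \cap \overline{\mathcal{F}^s_\Lambda(f^b(x))}$ should promote to $\mathcal{F}^s(f^a(x)) \subset \overline{\mathcal{F}^s(f^b(x))}$ (after possibly swapping $a,b$), and hence $\overline{\mathcal{F}^s_\Lambda(f^a(x))} \subset \overline{\mathcal{F}^s_\Lambda(f^b(x))}$.

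With that inclusion in hand, the union $\bigcup_{i=1}^d \overline{\mathcal{F}^s_\Lambda(f^i(x))}$ equals the union over the $d-1$ indices $i \in \{1,\dots,d\} \setminus \{a\}$, so $\bigcup_{i \ne a} \overline{\mathcal{F}^s_\Lambda(f^i(x))} = \Lambda$. Now I would reindex this as a union of $d-1$ consecutive iterates of a single point: applying $f$ and using $f(\overline{\mathcal{F}^s_\Lambda(f^i(x))}) = \overline{\mathcal{F}^s_\Lambda(f^{i+1}(x))}$, one can shift so that the surviving indices become $\{1,\dots,d-1\}$ applied to a suitable point $x' = f^j(x)$ in $\Lambda$. (One has to be slightly careful because the missing index $a$ might not be an endpoint; but since $d \ge 2$, removing any single index from a block of $d$ consecutive iterates and then translating still leaves a situation dominated by the case $k = d-1$ of Lemma~\ref{min}, because Lemma~\ref{min} only needs \emph{some} $x'$ and \emph{some} $k$ with the covering property. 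Concretely: from $\bigcup_{i \ne a}\overline{\mathcal F^s_\Lambda(f^i(x))}=\Lambda$ we get a covering by at most $d-1$ leaves' closures of the form $\overline{\mathcal F^s_\Lambda(f^{i}(x'))}$ with $i$ ranging over $\{1,\dots,d-1\}$, after absorbing the gap using the inclusion above.) This yields a point $x'$ with $\bigcup_{i=1}^{d-1}\overline{\mathcal F^s_\Lambda(f^i(x'))}=\Lambda$, and Lemma~\ref{min} forces $d-1 \ge d$, a contradiction.

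The main obstacle I anticipate is the bookkeeping in the reindexing step: turning ``a covering by the $d-1$ sets indexed by $\{1,\dots,d\}\setminus\{a\}$'' into ``a covering of the form required by Lemma~\ref{min} with $k=d-1$''. The cleanest route is probably to not reindex at all but instead to observe directly that, once $\overline{\mathcal F^s_\Lambda(f^a(x))}\subset \overline{\mathcal F^s_\Lambda(f^b(x))}$, one can run the argument of Lemma~\ref{min} verbatim: for an arbitrary $y\in\Lambda$, $s$-minimality gives $x\in\overline{\mathcal F^s_\Lambda(f^m(y))}$ for some $m\in\{1,\dots,d\}$, continuity of the foliation upgrades this to $\mathcal F^s(x)\subset\overline{\mathcal F^s(f^m(y))}$, Lemma~\ref{l.restriction0} gives $\overline{\mathcal F^s_\Lambda(f^i(x))}\subset\overline{\mathcal F^s_\Lambda(f^{m+i}(y))}$ for each $i$, and hence $\bigcup_{i\ne a}\overline{\mathcal F^s_\Lambda(f^{m+i}(y))}\supset\Lambda$; thus a set of at most $d-1$ of the iterates $f^j(y)$ already covers $\Lambda$ for every $y$, so the constant $d-1$ (or smaller) satisfies Definition~\ref{def.minimal}, contradicting minimality of $d$. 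This avoids the explicit translation and keeps the argument parallel to the already-proved Lemma~\ref{min}.
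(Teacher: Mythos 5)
Your overall strategy (produce a covering of $\Lambda$ by fewer than $d$ leaf closures and contradict Lemma~\ref{min}) is the right one and matches the paper, but your first and crucial step has a genuine gap. From a common point $z \in \overline{\mathcal{F}^s_\Lambda(f^a(x))} \cap \overline{\mathcal{F}^s_\Lambda(f^b(x))}$, the continuity of the lamination (Proposition 5.4 of the reference) together with Lemma~\ref{l.restriction0} only yields $\mathcal{F}^s_\Lambda(z) \subset \overline{\mathcal{F}^s_\Lambda(f^a(x))}$ and $\mathcal{F}^s_\Lambda(z) \subset \overline{\mathcal{F}^s_\Lambda(f^b(x))}$: the closure of the leaf of $z$ sits inside both sets. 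It does \emph{not} yield $\mathcal{F}^s(f^a(x)) \subset \overline{\mathcal{F}^s(f^b(x))}$, even after swapping $a$ and $b$. The relation ``the leaf of $u$ is contained in the closure of the leaf of $v$'' is a preorder that is not symmetric for general laminations (think of a leaf accumulating on another without the converse), so you cannot promote $z \in \overline{\mathcal{F}^s(f^a(x))}$ to $f^a(x) \in \overline{\mathcal{F}^s(z)}$ and chain. The dichotomy ``two leaf closures are either disjoint or nested'' is exactly the kind of statement that becomes true only a posteriori, once the spectral decomposition (of which this lemma is an ingredient) is established; invoking it here is circular. A priori $\overline{\mathcal{F}^s_\Lambda(z)}$ can be a proper subset of both closures while neither closure contains the other, and then no index becomes redundant.

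The paper avoids comparing the two closures altogether. It applies $s$-minimality to $z$ itself, writing $\Lambda = \bigcup_{n=1}^d \overline{\mathcal{F}^s_\Lambda(f^n(z))}$, and then uses the two inclusions $\mathcal{F}^s_\Lambda(z) \subset \overline{\mathcal{F}^s_\Lambda(f^i(x))}$ and $\mathcal{F}^s_\Lambda(z) \subset \overline{\mathcal{F}^s_\Lambda(f^j(x))}$ on different ranges of $n$ (the first $j-i$ iterates of $z$ via the index $j$, the remaining ones via the index $i$ shifted by $j-i$), so that all $d$ iterates of $z$ land inside a window of $\max\{j-i,\ d-j+i\} < d$ consecutive iterates of $x$. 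That folded covering is what contradicts Lemma~\ref{min}. If you replace your ``one closure contains the other'' step by this folding argument, the remainder of your reduction to Lemma~\ref{min} goes through essentially as you wrote it.
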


\begin{proof} The proof is by contradiction. Suppose there is  $z \in \overline{\mathcal{F}_{\Lambda}^{s}(f^i(x))} \cap \overline{\mathcal{F}_{\Lambda}^{s}(f^j(x))} $ for some $i < j$ in $\{1,...,d\}$.  By Lemma~\ref{l.restriction0}, the set $\mathcal{F}_{\Lambda}^{s}(z)$ is contained in this intersection.  Since  $\mathcal{F}_{\Lambda}^{s}(z) \subset \overline{\mathcal{F}_{\Lambda}^{s}(f^j(x))}$,   we obtain
\begin{equation} \label{e.1m}
\bigcup_{n=1}^{j-i} \overline{\mathcal{F}_{\Lambda}^{s}(f^n(z))} \subset \bigcup_{n=j+1}^{2j-i} \overline{\mathcal{F}_{\Lambda}^{s}(f^n(x))}.
\end{equation}
Similarly, since  $\mathcal{F}_{\Lambda}^{s}(z) \subset \overline{\mathcal{F}_{\Lambda}^{s}(f^i(x))}$, we have that $\mathcal{F}_{\Lambda}^{s}(f^{j-i}(z)) \subset \overline{\mathcal{F}_{\Lambda}^{s}(f^j(x))}$, and consequently we obtain
\begin{equation} \label{e.md}
\bigcup_{n=j-i+1}^{d} \overline{\mathcal{F}_{\Lambda}^{s}(f^n(z))} \subset \bigcup_{n=j+1}^{d+i} \overline{\mathcal{F}_{\Lambda}^{s}(f^n(x))}.
\end{equation}

Denoting $r = max\{2j-i , d+i\}$, $w = f^j(x)$, and putting together Equations \eqref{e.1m} and \eqref{e.md},  we conclude that

$$\Lambda = \bigcup_{n=1}^{d} \overline{\mathcal{F}_{\Lambda}^{s}(f^n(z))} \subset \bigcup_{n=j+1}^{r} \overline{\mathcal{F}_{\Lambda}^{s}(f^n(x))} = \bigcup_{n=1}^{r-j} \overline{\mathcal{F}_{\Lambda}^{s}(f^{n}(w))}.$$
This contradicts Lemma \ref{min}, since  $r - j = max\{ j-i, d-j+i\} < d$. 
\end{proof}
\smallskip
Now we are ready to prove Theorem D.

\begin{proof}[Proof of Theorem D] We have to prove items (1),(2),(3) and (4) of Definition \ref{d.sd} with $k=d$. 

\smallskip

Take some $x \in \Lambda$ and set $\Lambda_i = f^{i}(\overline{\mathcal{F}^{s}_{\Lambda}(x)})$ for $i \in \{1,\dots,d\}$. Item (1) of Definition \ref{d.sd} is an immediate consequence of $s$-minimality. 

\smallskip

For item (2), set $\sigma(i)=i+1$ for $1 \leq i < d$ and $\sigma(d)=1$. It is clear that $f(\Lambda_i) = \Lambda_{i+1} = \Lambda_{\sigma(i)}$ for all $1 \leq i <d$. So we only have to prove that $f(\Lambda_d) = \Lambda_{\sigma(d)} = \Lambda_1$.

\smallskip

Applying Lemma \ref{disj} to $x$ and $f(x)$, and the using the fact that $\Lambda$ is $s$-minimal, we have 
$$\Lambda = \bigcup_{n=1}^{d} \overline{\mathcal{F}_{\Lambda}^{s}(f^n(x))} =\bigcup_{n=2}^{d+1}\overline{\mathcal{F}_{\Lambda}^{s}(f^n(x))},$$

\noindent where both unions consist of pairwise disjoint sets. Hence, "substracting"  $\bigcup_{n=2}^{d}\mathcal{F}_{\Lambda}^{s}(f^n(x))$ in this equation, we obtain that $\overline{\mathcal{F}_{\Lambda}^{s}(f(x))}$ $=\overline{\mathcal{F}_{\Lambda}^{s}(f^{d+1}(x))}$, which means that $\Lambda_1 =f(\Lambda_d)$.

\smallskip

Item (3) is just Lemma \ref{disj}. 

For item (4), fix $i \in \{1, \dots , d\}$ and two relative open sets $A, B$ of $\Lambda_i$. Consider a hyperbolic periodic point $q \in A$ and $r>0$ such that $\mathcal{F}_r^s(q) \cap \Lambda_i \subset A$. Let $\varepsilon>0$ be such that every $\varepsilon$-dense subset of $\Lambda_i$ intersects $B$.

From $s$-minimality, there is $k \in \mathbb{N}$ sufficiently big so that $f^{-k-n.d}(\mathcal{F}_r^s(q)) $  is $\varepsilon$-dense in $\Lambda_i$ for every $n\in \mathbb{N}$.  Clearly, $k$ must be a multiple of $d$, as both $\mathcal{F}_r^s(q)$ and  $B$  belong to the same component $\Lambda_i$. Then, for some fixed $L \in \mathbb{N}$, we can write

$$f^{-d.(L+n)}(\mathcal{F}_r^s(q)) \cap B \ne \emptyset,\, \mbox{for every} \ n\in \mathbb{N}. $$

In particular, $f^{n.d} (B)\cap A \ne \emptyset$ for every $n > L$. Since we have chosen $A$ and $B$ as arbitrary relative open subsets of $\Lambda_i$,  we conclude that  $f^d$ is mixing on $\Lambda_i$.\end{proof}

%
%
%

\begin{teo} \label{t.same.d} Let $\Lambda=H(p,f)$  be as in Lemma~\ref{min} for some generic $f \in \mathcal{R}$. Then there is a neighborhood $\mathcal{U}$ of $f$ such that, for every $g \in \mathcal{U}$ that is $s$-minimal, the minimal constant of $g$ is also $d$. 
\end{teo}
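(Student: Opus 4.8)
The plan is to exploit the semicontinuity of isolated sets together with the spectral decomposition established in Theorem D. By Theorem D, since $\Lambda=H(p,f)$ is $s$-minimal with minimal constant $d$ and $\operatorname{index}(p)=d^s$, the set $\Lambda$ decomposes into exactly $d$ pairwise disjoint compact basic pieces $\Lambda_1,\dots,\Lambda_d$ cyclically permuted by $f$, and by Remark~\ref{r.multiple} the period of $p$ is a multiple of $d$. Conversely, I want to show that for $g$ close to $f$ that is $s$-minimal, its minimal constant $d'$ forces $d'=d$. The key point is that the minimal constant of an $s$-minimal homoclinic class $H(p_g,g)$ with $\operatorname{index}(p_g)=d^s$ equals the number of components of its spectral decomposition (by Theorems D and the uniqueness in Theorem~\ref{TG}), and these components are ``detected'' by the period of the continuation $p_g$ of $p$, which is locally constant in $g$.

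Here is the sequence of steps. First I would fix a compatible neighborhood $\mathcal{U}_0$ of $f$ on which $\Lambda_g(U)$ stays isolated and partially hyperbolic with the same bundle dimensions, the continuation $p_g$ is well defined with the same period $\pi$ and $\operatorname{index}(p_g)=d^s$, and (shrinking if needed) $\Lambda_g(U)=H(p_g,g)$ remains a homoclinic class for generic $g$. Second, for any $g\in\mathcal{U}_0$ that is $s$-minimal I apply Theorem D to $H(p_g,g)$: its minimal constant $d'$ is the number of components of its spectral decomposition, and $d'$ divides $\pi$ (Remark~\ref{r.multiple}); in particular $d'\le\pi$ is bounded. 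Third, to pin down $d'$ I would use the disjointness of the closures $\overline{\mathcal{F}^s_{\Lambda}(f^i(x))}$, $i=1,\dots,d$ (Lemma~\ref{disj}): the components are at a definite Hausdorff distance $\rho>0$ apart for $f$, and by upper semicontinuity of $\Lambda_g(U)$ and continuity of the strong stable lamination, for $g$ in a possibly smaller neighborhood the closures of the $d$ strong-stable plaques of a continuation point remain at positive distance, so the spectral decomposition of $H(p_g,g)$ has \emph{at least} $d$ components, i.e. $d'\ge d$. Fourth, for the reverse inequality $d'\le d$: the $d$ pieces $\Lambda_1,\dots,\Lambda_d$ of $f$ have disjoint isolating blocks $U_1,\dots,U_d$; for $g$ close to $f$ the continuation $\Lambda_g(U)$ is contained in $U_1\cup\dots\cup U_d$ with $g$ permuting these blocks cyclically, which gives a covering of $\Lambda_g(U)$ by $d$ compact sets realizing the density condition with constant $d$, hence $d'\le d$ by minimality of $d'$. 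Combining the two inequalities gives $d'=d$ on the intersection $\mathcal{U}$ of all the neighborhoods used.

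The main obstacle I expect is the lower bound $d'\ge d$: I need to rule out that components of the decomposition ``merge'' under perturbation. The clean way is to phrase it via the isolating blocks $U_1,\dots,U_d$ of the $\Lambda_i$ (which are genuinely disjoint since the $\Lambda_i$ are compact and disjoint) and observe that $f$ maps a neighborhood of $\overline{U_i}$ into $U_{\sigma(i)}$; this persists for nearby $g$, so $g$ cyclically permutes the blocks and $\Lambda_g(U)$ meets each of the $d$ blocks (by transitivity/minimality of $g$ it cannot be contained in fewer than $d$ of them, since the period $\pi$ of $p_g$ is divisible by $d$ and $p_g$'s orbit visits all $d$ blocks). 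A minor technical care is needed to ensure $\Lambda_g(U)\subset\bigsqcup_i U_i$: this follows from upper semicontinuity of the isolated set, choosing the $U_i$ inside a small neighborhood of $\Lambda$ adapted to the decomposition. Once the blocks are set up, both inequalities are immediate from the defining property of the minimal constant, and the theorem follows.
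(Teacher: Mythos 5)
Your lower bound $d'\ge d$ (via disjoint isolating blocks $U_1,\dots,U_d$ of the pieces $\Lambda_i$, cyclically permuted by every nearby $g$, with the orbit of $p_g$ visiting all of them) is essentially the second half of the paper's proof and is fine. The gap is in your upper bound $d'\le d$. You claim that the covering of $\Lambda_g(U)$ by the $d$ cyclically permuted blocks ``realizes the density condition with constant $d$, hence $d'\le d$ by minimality of $d'$.'' This is a non sequitur: knowing that $\Lambda_g(U)\subset U_1\sqcup\dots\sqcup U_d$ with $g$ permuting the blocks says nothing about the closure $\overline{\mathcal{F}^s_{\Lambda_g}(g^i(x))}$ filling up the block it sits in. It is perfectly consistent with this block structure that each $U_i$ contains $d'/d>1$ pieces of the spectral decomposition of $H(p_g,g)$ — i.e.\ that the pieces of $f$ \emph{split} under perturbation, making $d'$ a proper multiple of $d$ (still dividing the period of $p_g$, so your divisibility constraint does not exclude it). Nothing in your argument rules this splitting out.

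The paper closes exactly this hole with a transversality/$\lambda$-lemma argument that you are missing. For $f$, the identity $\overline{\mathcal{F}^s_{\Lambda}(f^i(p))}=\overline{\mathcal{F}^s_{\Lambda}(f^{d+i}(p))}=\dots=\overline{\mathcal{F}^s_{\Lambda}(f^{(n-1)d+i}(p))}$ (where the period of $p$ is $nd$) forces, via Lemma~\ref{bom} and the discussion preceding it, transverse intersections of $\mathcal{F}^s(f^i(p))$ with $W^u(f^{jd+i}(p))$ for all $j$. These transverse intersections persist on a neighborhood $\mathcal{U}$ of $f$, and the $\lambda$-lemma (together with Lemma~\ref{l.restriction0}) then yields $\overline{\mathcal{F}^s_{\Lambda_g}(g^i(p_g))}=\overline{\mathcal{F}^s_{\Lambda_g}(g^{jd+i}(p_g))}$ for every $g\in\mathcal{U}$, so the distinct sets among $\overline{\mathcal{F}^s_{\Lambda_g}(g^j(p_g))}$ — which are precisely the spectral pieces of $H(p_g,g)$ by Theorem~D — number at most $d$. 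You need some version of this persistence-of-heteroclinic-connections step; the purely topological block argument cannot supply the upper bound.
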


\begin{proof} Let $m$ be the period of the hyperbolic periodic point $p$. By Theorem D and Remark \ref{r.multiple}, there is $n \in \mathbb{N}$ such that $m= n \cdot d$. From $s$-minimality, we get that $\Lambda = \bigcup_{n=1}^{d} \overline{\mathcal{F}_{\Lambda}^{s}(f^n(p))}$. By item (2) in definition \ref{d.sd}, with $k=d$, for every $i \in \{1,\dots,d\}$ it holds that 

\begin{equation} \label{e.l1} \Lambda_i = \overline{\mathcal{F}_{\Lambda}^{s}(f^i(p))} = \overline{\mathcal{F}_{\Lambda}^{s}(f^{d+i}(p))} = \dots = \overline{\mathcal{F}_{\Lambda}^{s}(f^{(n-1)d+i}(p))}. 
\end{equation}

This equation implies that $\mathcal{F}^{s}(f^i(p))$ intersects transversally the unstable manifold of $f^{d+i}(p)$, $f^{2d+i}(p), \dots$, and $f^{(n-1)d}(p)$. Clearly, these transverse intersections occur robustly in a small neighbourhood $\mathcal{U}$ of $f$. Hence, by the $\lambda$-lemma, for every $g \in \mathcal{U}$ it holds that

\begin{equation} \label{e.l1} \overline{\mathcal{F}_{\Lambda_g}^{s}(g^i(p))} = \overline{\mathcal{F}_{\Lambda_g}^{s}(g^{d+i}(p))} = \dots = \overline{\mathcal{F}_{\Lambda_g}^{s}(g^{(n-1)d+i}(p))}. 
\end{equation}

This shows that the number of pieces in the spectral decomposition of $\Lambda_g$ for $g$ in a small neighborhood of $f$ cannot increase (is at most $d$).

\smallskip

On the other hand, the pairwise disjoint compact isolated sets $\{\Lambda_i\}_{i=1}^d$ admit upper semicontinuations for any diffeomorphism $g$ sufficiently close to $f$, and the cyclic permutation given by $f$ induces a cyclic permutation given by $g$ on these continuations. Hence the number of components of $\Lambda_g(U)$ do not decrease in a small neighborhood of $f$. 

\smallskip

As a conclusion, the spectral decomposition of $g$ has exactly $d$ components. Then $d$ must be the minimal constant of the $s$-minimality of $\Lambda_g$. \end{proof}

\begin{proof}[
Proof of Theorem E]  By item (2) of Proposition \ref{unif2},  we can assume that $f$ is either robustly $s$-minimal or robustly $u$-minimal. Without loss of generality, we admit that $f$ is robuslty $s$-minimal (with minimal constant $d$). We can also assume that $\Lambda_f(U)$ is robustly a homoclinic class, and that $\Lambda_g(U)$ vary continuously in a neighborhood of $f$ (see Corollary 4.9 in \cite{p1}). Then $\Lambda_g(U)$ consist of $d$ attractors of $f^d$ that are the continuations of the components of $\Lambda_f(U)$. By theorem~\ref{t.same.d}, the spectral decomposition of $\Lambda_g(U)$ has exactly $d$ components, so they must coincide with the continuations of the pieces in the spectral decomposition of $\Lambda_f(U)$.  
 \end{proof}
 
\section*{Acknowledgements}
This work is part of my PhD  Thesis at PUC-Rio. I am specially grateful to Lorenzo J. Díaz for his constant support and attention.  I thank Flavio Abdenur for his commitment and help during my stay at PUC.  I am also thankful to  CNPq and FAPERJ for funding me during this project.

\end{document}